\theoremstyle{plain}
\newtheorem{theorem}{Theorem}[section]
\newtheorem{corollary}[theorem]{Corollary}
\newtheorem{lemma}[theorem]{Lemma}
\newtheorem{proposition}[theorem]{Proposition}
\theoremstyle{definition}
\newtheorem{example}[theorem]{Example}
\title{A Frattini Theory for Leibniz Algebras}
\author{Chelsie Batten\\
\small Department of Mathematics\\[-0.8ex]
\small North Carolina State University, North Carolina, USA\\
\small \texttt{cabatten@ncsu.edu}\\
\and
Lindsey Bosko-Dunbar\\
\small Dept of Natural Sciences \& Mathematics\\[-0.8ex]
\small West Liberty University, West Virginia, USA\\
\small \texttt{lindsey.bosko@westliberty.edu}\\
\and
Allison Hedges\\
\small Department of Mathematics\\[-0.8ex]
\small North Carolina State University, North Carolina, USA\\
\small \texttt{armcalis@ncsu.edu}\\
\and
J. T. Hird\\
\small Department of Mathematics\\[-0.8ex]
\small North Carolina State University, North Carolina, USA\\
\small \texttt{jthird@ncsu.edu}\\
\and
Kristen Stagg\\
\small Department of Mathematics\\[-0.8ex]
\small University of Texas at Tyler, Texas, USA\\
\small \texttt{klstagg@ncsu.edu}\\
\and
Ernest Stitzinger\\
\small Department of Mathematics\\[-0.8ex]
\small North Carolina State University, North Carolina, USA\\
\small \texttt{stitz@math.ncsu.edu}\\
}
\date{August 11, 2011}
\begin{document}
\maketitle

\pagebreak
\begin{abstract}
A Frattini theory for non-associative algebras was developed in \cite{towersfrat} and results for particular classes of algebras have appeared in various articles.  Especially plentiful are results on Lie algebras.  It is the purpose of this paper to extend some of the Lie algebra results to Leibniz algebras.  
\end{abstract}

\section{Introduction}\label{sec:intro}

Following Barnes \cite{barnesleib}, we will say that an algebra is Leibniz if it satisfies the identity $x(yz) = (xy)z + y(xz)$.  Hence, left multiplication is a derivation.  (Many authors use a definition requiring that right multiplication is a derivation instead of our choice.) 

We focus on Leibniz algebras in which the Frattini ideal is 0.  Recall that the Frattini ideal is the largest ideal that is contained in the Frattini subalgebra.  For Lie algebras, these concepts coincide when either the algebra is solvable \cite{barnesgh} or when the underlying field has characteristic zero.  To the best of our knowledge, the only known time when they are not equal 
is the three dimensional cross product algebra over a field of characteristic 2 and algebras which have this algebra as a direct summand.  We show in Example \ref{CounterEx} that the Frattini subalgebra of a solvable Leibniz algebra need not be an ideal.  We also show that in the characteristic 0 case, the Frattini subalgebra is always an ideal.  For algebra $A$, the Frattini ideal will be denoted by $\Phi (A)$ and the Frattini subalgebra by $F(A)$.  We show that many of the Lie algebra results from references \cite{stitz1} - \cite{towers07} carry over to the Leibniz case.  In particular, we investigate elementary Leibniz algebras, 
those algebras $A$ in which the Frattini ideal of every subalgebra of $A$ is 0.

The Frattini subalgebra for Leibniz algebras already appears in \cite{barnesleib}.  There, Barnes extends his Lie algebra result which yields that if $A/W$ is nilpotent, then $A$ is nilpotent when $W$ is an ideal contained in the Frattini subalgebra of the Leibniz algebra $A$.  

Let $A$ be a Leibniz algebra.  The following is a review of material found in \cite{barnesleib}.  Any product of $n$ copies of $a$ is 0 unless the product is left normed; hence, we define $a^{n} = a(a(...(aa)...))$.  Define $A^{1} = A$ and inductively define $A^{n} = AA^{n-1}$.  It is known that the product of any $n$ elements from $A$ is in $A^{n}$.  $A$ is nilpotent of class $c$ if $A^{c+1} = 0$ but $A^{c} \neq 0$.  Let $Z(A)$ be the center of $A$; that is, $Z(A) = \left\{x | xa = 0 = ax \mbox{ for all }a \in A\right\}$.  $Z(A)$ is an ideal in $A$.  If $A$ is nilpotent of class $c$, then $A^{c} \subset Z(A)$ and therefore, $Z(A) \neq 0$.  Define the upper central series of $A$ as usual, and it follows that $A$ is nilpotent if and only if the upper central series terminates at $A$.

If $W$ is a subalgebra of $A$, define the left centralizer of $W$ as $Z^{l}_{A}(W) = \left\{x \in A | xW = 0\right\}$ and the centralizer of $W$ as $Z_{A}(W) = \{x \in A | xW = Wx = 0\}$.

We will often use the following result, which is Lemma 1.9 in \cite{barnesleib}.

\begin{lemma}\label{Lem1.9}
If $B$ is a minimal ideal of $A$, then either $BA = 0$ or $ba = -ab$ for all $a\in A$ and all $b\in B$.
\end{lemma}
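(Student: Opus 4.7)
The plan is to construct an ideal $S$ of $A$ contained inside $B$ so that the dichotomy $S = 0$ or $S = B$ (forced by minimality of $B$) yields exactly the two alternatives in the statement. The natural candidate is $S := \mathrm{span}\{ab + ba : a \in A,\ b \in B\}$. Since $B$ is an ideal, each $ab$ and $ba$ already lies in $B$, so $S \subseteq B$ for free, and the real work is to verify that $S$ is itself an ideal of $A$.

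For $SA \subseteq S$, I would apply the Leibniz identity $x(yz) = (xy)z + y(xz)$ twice. Rearranging gives $(ab)c = a(bc) - b(ac)$ and $(ba)c = b(ac) - a(bc)$; these cancel, so $(ab+ba)c = 0$ and in fact $SA = 0$ as an identity valid in any Leibniz algebra. For $AS \subseteq S$, I would expand $c(ab+ba) = (ca)b + a(cb) + (cb)a + b(ca)$ via the same identity and regroup as $[(ca)b + b(ca)] + [a(cb) + (cb)a]$. The first bracket is a generator $a'b + ba'$ of $S$ with $a' = ca \in A$ and $b \in B$; the second is a generator $ab' + b'a$ of $S$ with $a \in A$ and $b' = cb$, noting that $b' \in B$ because $B$ is an ideal. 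Both summands lie in $S$.

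With $S$ now an ideal of $A$ sitting inside $B$, minimality of $B$ forces $S = 0$ or $S = B$. In the first case every generator vanishes, so $ba = -ab$ for all $a \in A$ and $b \in B$, yielding the second alternative. In the second case, $BA = SA = 0$, yielding the first. The only step that really requires insight is selecting $S$; once one notices that the Leibniz identity collapses $(ab+ba)c$ to zero, the construction is essentially forced and the remaining verifications are bookkeeping, so I do not anticipate a serious obstacle.
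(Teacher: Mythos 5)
Your proof is correct: the span $S$ of the elements $ab+ba$ with $b\in B$ is indeed a two-sided ideal of $A$ contained in $B$ (the computation $(ab)c+(ba)c=0$ and the regrouping of $c(ab+ba)$ both check out under the identity $x(yz)=(xy)z+y(xz)$, using $cb\in B$ for the second bracket), and minimality then gives exactly the stated dichotomy. The paper itself gives no proof --- it simply quotes this as Lemma 1.9 of Barnes's paper on Leibniz algebras --- but your argument is the standard one for this fact (it is the intersection-with-the-Leibniz-kernel idea, localized to $B$), so there is nothing to fault.
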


Cartan subalgebras are defined in Leibniz algebras using the same definition as in Lie algebras; they are nilpotent, self-normalizing subalgebras.  They are discussed in \cite{barnesleib} and \cite{stitz2}.  Further results on Cartan subalgebras of Leibniz algebras can be obtained by directly following the proofs in Lie algebras.  In particular, as in Lie algebras, solvable Leibniz algebras have Cartan subalgebras (see Theorem 3 in \cite{barnes2} for a proof of the Lie algebra case) and

\begin{lemma}\label{NestedCartan}
If $W$ is an ideal in a Leibniz algebra $A$, $U$ is a subalgebra of $A$ with $W \subseteq U$, $U/W$ is a Cartan subalgebra of $A/W$ and $H$ is a Cartan subalgebra of $U$, then $H$ is a Cartan subalgebra of $A$.
\end{lemma}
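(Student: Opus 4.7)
The plan is to mimic the standard Lie-algebra argument, checking nilpotence and self-normalization for $H$ inside $A$, with the image of $H$ in $A/W$ serving as the bridge between the two Cartan hypotheses.

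Nilpotence is immediate: $H$ is already a Cartan subalgebra of $U$, hence nilpotent, and this property is intrinsic to $H$, so nothing changes when we enlarge the ambient algebra from $U$ to $A$.

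For self-normalization, the first key step is to establish $U = H + W$. Since $W \subseteq U$ and $W$ is an ideal of $A$, it is an ideal of $U$, so $(H+W)/W$ is a subalgebra of $U/W$. I would verify that $(H+W)/W$ is a Cartan subalgebra of $U/W$: nilpotence follows because $(H+W)/W$ is a homomorphic image of $H$ (via the projection $U \to U/W$ restricted to $H$, whose kernel is $H \cap W$), and self-normalization in $U/W$ reduces to the self-normalization of $H$ in $U$ by a routine lift — this is one of the proofs that, as noted in the paper, transfers directly from the Lie case. But $U/W$ is itself nilpotent (being a Cartan subalgebra of $A/W$), and a nilpotent algebra has only itself as a Cartan subalgebra, so $(H+W)/W = U/W$, giving $U = H + W$.

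With this in hand, let $x \in N_A(H)$, so $xH \subseteq H$ and $Hx \subseteq H$. Because $W$ is an ideal of $A$, $xW \subseteq W$ and $Wx \subseteq W$, and therefore
\[
x(H+W) \subseteq xH + xW \subseteq H + W, \qquad (H+W)x \subseteq Hx + Wx \subseteq H + W.
\]
Thus $\bar{x} = x + W$ normalizes $(H+W)/W = U/W$ inside $A/W$. Since $U/W$ is a Cartan subalgebra of $A/W$, it is self-normalizing there, forcing $\bar{x} \in U/W$ and hence $x \in U$. But now $x \in N_U(H)$, and $H$ is self-normalizing in $U$ as a Cartan subalgebra of $U$, so $x \in H$. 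This shows $N_A(H) = H$ and completes the argument.

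The main obstacle is the intermediate claim that $(H+W)/W$ is a Cartan subalgebra of $U/W$; concretely, verifying self-normalization of the image under the quotient is where one must be careful in the Leibniz setting, since one needs to lift a normalizer element back through $W \subseteq U$ and invoke the self-normalization of $H$ in $U$ — but this mirrors the Lie-algebra argument and is covered by the remark in the paper that such Cartan-subalgebra results transfer directly.
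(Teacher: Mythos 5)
Your argument is correct and is essentially the proof the paper intends: the paper offers no proof of its own, deferring to Lemma 4 of Barnes's paper on Cartan subalgebras, whose argument --- nilpotence of $H$ is intrinsic; $U = H + W$ because the image of $H$ is a Cartan subalgebra of the nilpotent algebra $U/W$; then push an element of $N_A(H)$ down to $A/W$, use self-normalization of $U/W$ to land in $U$, and use self-normalization of $H$ in $U$ to land in $H$ --- is exactly what you wrote. The one point to flag is that your ``routine lift'' establishing self-normalization of $(H+W)/W$ in $U/W$ is not routine as stated (from $uH + Hu \subseteq H + W$ one cannot directly invoke $N_U(H) = H$); it is a genuinely separate lemma (the homomorphic image of a Cartan subalgebra is a Cartan subalgebra, Barnes's Lemma 3, usually proved via the Fitting decomposition relative to $H$), though this is indeed covered by the paper's standing remark that such Cartan-subalgebra results transfer directly from the Lie to the Leibniz setting.
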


The proof of this lemma is the same as the Lie algebra proof of Lemma 4 in \cite{barnes2}.


\section{Leibniz Algebras with $\Phi(A) = 0$}

The sum of nilpotent ideals is nilpotent by Corollary 3 of \cite{jacobsonleib}.  Therefore, define the nilradical of $A$ to be the largest nilpotent ideal of $A$ and denote it by $Nil(A)$.

\begin{lemma}\label{min nilp}
If $B$ is a minimal ideal in a nilpotent Leibniz algebra $A$, then
\begin{enumerate}
\item $B \cap Z(A) \neq 0$.
\item $B \subseteq Z(A)$.
\end{enumerate}
\end{lemma}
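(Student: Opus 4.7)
The plan is to exploit the nilpotence of $A$ to manufacture, inside $B$, a nonzero subspace annihilated on both sides by $A$, and then leverage the minimality of $B$ to upgrade the statement from ``nonzero intersection'' to ``full containment.''

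For (1), I would consider the descending chain of subspaces
\[
B \;\supseteq\; AB \;\supseteq\; A(AB) \;\supseteq\; \cdots ,
\]
i.e.\ $B_0 = B$ and $B_{i+1} = AB_i$. Each $B_{i+1} \subseteq B_i$ because $B$ is an ideal, and an easy induction shows $B_i \subseteq A^{i+1}$, so nilpotence of $A$ forces the chain to reach $0$. Let $k \ge 0$ be the largest index with $B_k \neq 0$; then $AB_k = B_{k+1} = 0$, so $B_k$ is annihilated by left multiplication from $A$. For right multiplication, I would invoke Lemma \ref{Lem1.9}: since $B_k \subseteq B$, either $BA=0$ (which immediately gives $B_k A = 0$), or $ba = -ab$ for all $b \in B$ and $a \in A$, in which case $B_k A = -A B_k = 0$. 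In either case $B_k \subseteq Z(A)$, and since $B_k \neq 0$ and $B_k \subseteq B$, we conclude $B \cap Z(A) \neq 0$.

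For (2), I would use that $Z(A)$ is an ideal of $A$ (a direct check from the Leibniz identity: if $z \in Z(A)$ and $a \in A$, then $(az)b = a(zb) - z(ab) = 0$ and $b(az) = (ba)z + a(bz) = 0$, while $za = 0$ by definition). Hence $B \cap Z(A)$ is an ideal of $A$ contained in $B$, and it is nonzero by part (1). Minimality of $B$ then forces $B \cap Z(A) = B$, i.e.\ $B \subseteq Z(A)$.

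The main technical obstacle is the right-multiplication side: in a Leibniz algebra we cannot simply deduce $B_k A = 0$ from $A B_k = 0$ as in the Lie case. Lemma \ref{Lem1.9} is precisely the tool that bridges this gap, converting the absence of anti-commutativity into a clean dichotomy that handles both possibilities uniformly.
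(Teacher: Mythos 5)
Your proposal is correct and follows essentially the same route as the paper: the same descending chain $B \supseteq AB \supseteq A(AB) \supseteq \cdots$ terminating by nilpotence, the same appeal to Lemma \ref{Lem1.9} to reduce the right-multiplication side to the left, and the same use of the fact that $Z(A)$ is an ideal together with minimality of $B$ to get part (2). Your version merely makes explicit a few points the paper leaves implicit (the containment $B_i \subseteq A^{i+1}$ and the verification that $Z(A)$ is an ideal), which is fine.
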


\begin{proof}
Since $B$ is minimal, either $BA = 0$ or $ba = -ab$ for all $a\in A$ and $b\in B$.  Hence it is sufficient to work on the left of $B$.  Let $A_{j+1}B = A(A_j B)$.  If $AB = 0$, both results hold.  Suppose that $AB \neq 0$.  Then there is an integer $k$ such that $A_{k+1}B = 0$ and $A_k B \neq 0$.  Hence $0 \neq A_k B \subseteq B \cap Z(A)$, so 1 holds.  Since $Z(A)$ is an ideal and $B$ is minimal, 2 holds.
\end{proof}

\begin{proposition}
Let $A$ be a Leibniz algebra and $B$ be a minimal ideal of $A$.  Then $Nil(A) \subseteq Z_{A}(B)$.
\end{proposition}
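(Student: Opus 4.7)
My plan is to prove $NB = BN = 0$, where $N := Nil(A)$. For this I reduce to showing $NB = 0$ and then exploit the nilpotency of $N$.

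Since $B$ is a minimal ideal, Lemma \ref{Lem1.9} gives a dichotomy: either $BA = 0$, in which case $BN = 0$ automatically, or $ba = -ab$ for every $a \in A$ and $b \in B$, in which case $BN = NB$ as subspaces. Hence in either case it suffices to show $NB = 0$.

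Next I verify that $NB$ is a two-sided ideal of $A$ contained in $B$. Left absorption is immediate from the Leibniz identity $a(nb) = (an)b + n(ab)$, since $an \in N$ and $ab \in B$. For right absorption, rewrite the Leibniz identity as $(nb)a = n(ba) - b(na)$: the first summand lies in $NB$ since $ba \in B$, and using the dichotomy from Lemma \ref{Lem1.9} again, the second summand $b(na)$ either vanishes (if $BA = 0$) or equals $-(na)b \in NB$. Thus $(NB)A \subseteq NB$, so $NB$ is an ideal of $A$.

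By minimality of $B$, either $NB = 0$ (and we are done) or $NB = B$. To rule out the latter, I would prove by induction that the iterated product $N_k B := N(N(\cdots(NB)))$ with $k$ factors of $N$ satisfies $N_k B \subseteq N^k$ for all $k \geq 1$; the base case uses $NB \subseteq N$ (because $N$ is an ideal), and the inductive step gives $N_{k+1} B = N(N_k B) \subseteq N \cdot N^k = N^{k+1}$. Since $N$ is nilpotent, $N^{c+1} = 0$ for some $c$, so $N_{c+1} B = 0$. However, assuming $NB = B$ and iterating yields $N_k B = B$ for all $k$, forcing $B = 0$ and contradicting the minimality of $B$. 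Hence $NB = 0$, as required.

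The main technical step is establishing that $NB$ is actually an ideal of $A$; the Leibniz identity gives left absorption cleanly, but right absorption genuinely requires the dichotomy from Lemma \ref{Lem1.9}. Once this is in hand, the rest is a descending-chain argument combining minimality of $B$ with nilpotency of $N$.
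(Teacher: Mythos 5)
Your proof is correct, but it follows a different route from the paper's. The paper splits on $N\cap B$: if $N\cap B=0$ then $NB$ and $BN$ both land in $N\cap B$ and vanish; if $N\cap B=B$ then $B$ sits inside the nilpotent ideal $N$ and the paper invokes its Lemma \ref{min nilp} (minimal ideals of nilpotent Leibniz algebras are central) to get $B\subseteq Z(N)$, whence $NB=BN=0$. You instead avoid the case split entirely: you reduce to $NB=0$ via Lemma \ref{Lem1.9}, verify directly from the Leibniz identity (plus Lemma \ref{Lem1.9} for right absorption) that $NB$ is an ideal of $A$ contained in $B$, and then kill the alternative $NB=B$ with the chain $N_kB\subseteq N^k$ and the nilpotency of $N$. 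The two arguments are close in spirit --- your descending-chain step is essentially the same mechanism hiding inside the paper's proof of Lemma \ref{min nilp} (the chain $A_{k+1}B=A(A_kB)$) --- but your version is self-contained modulo Lemma \ref{Lem1.9} and sidesteps the slightly delicate point in the paper's argument of treating $B$ as if it were a minimal ideal of $N$ rather than of $A$. The paper's version is shorter given that Lemma \ref{min nilp} is already on the books and gets reused; your observation that $NB$ is an ideal of $A$ is a clean standalone fact worth having. Both proofs are sound.
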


\begin{proof}
Let $N = Nil(A)$.  $N \cap B$ is 0 or $B$.  If it is 0, then $NB = 0 = BN$ and $N \subseteq Z_{A}(B)$.  If $N \cap B = B$, then $B$ is contained in $N$, hence $B \cap Z(N) \neq 0$ by Lemma \ref{min nilp}.  Since $B$ is minimal, this implies $B\subseteq Z(N)$.  Hence $NB = 0 = BN$, so $N \subseteq Z_A(B)$.
\end{proof}

The socle of $A$, $Soc(A)$ is the union of all minimal ideals of $A$ and is the direct sum of some of these minimal ideals.  The abelian socle of $A$, $Asoc(A)$ is the union of all abelian minimal ideals of $A$ and is the direct sum of some of them.  The concepts have counterparts in both Lie algebras and group theory.  The following several results also have analogues in Lie algebras and groups.

\begin{proposition}\label{SocReln}
Let $A$ be a a Leibniz algebra.  Then $Asoc(A) \subseteq Nil(A) \subseteq Z_{A}(Soc(A))$.
\end{proposition}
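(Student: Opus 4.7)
The plan is to establish the two inclusions separately, each by invoking a result already in hand.

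For the first inclusion $Asoc(A) \subseteq Nil(A)$, I would argue as follows. By definition, $Asoc(A)$ is a direct sum of abelian minimal ideals of $A$. For any abelian ideal $B$ we have $B \cdot B = 0$, so $B$ is nilpotent (of class at most $1$) and hence is a nilpotent ideal of $A$. Applying the result of Jacobson cited at the beginning of this section (Corollary 3 of \cite{jacobsonleib}), the sum of nilpotent ideals is again a nilpotent ideal, so $Asoc(A)$ is a nilpotent ideal of $A$. Since $Nil(A)$ is the largest such, $Asoc(A) \subseteq Nil(A)$.

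For the second inclusion $Nil(A) \subseteq Z_A(Soc(A))$, I would use the preceding proposition directly. For every minimal ideal $B$ of $A$ that proposition gives $Nil(A) \subseteq Z_A(B)$, so $x \cdot b = b \cdot x = 0$ for all $x \in Nil(A)$ and $b \in B$. Now $Soc(A)$ is a sum of minimal ideals $B_i$, so every element of $Soc(A)$ is a finite sum of elements from various $B_i$. Bilinearity of the Leibniz product then yields $x \cdot s = s \cdot x = 0$ for every $x \in Nil(A)$ and $s \in Soc(A)$, which is exactly $Nil(A) \subseteq Z_A(Soc(A))$.

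Neither step presents a real obstacle: the first inclusion reduces to the observation that abelian ideals are nilpotent and the closure of nilpotent ideals under sum, while the second is essentially a restatement of the previous proposition extended by linearity from a single minimal ideal to the socle. The only point requiring a moment of care is noting that centralizing each summand of $Soc(A)$ implies centralizing the full sum on both sides, which is immediate from bilinearity.
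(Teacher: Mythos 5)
Your proof is correct and follows essentially the same route as the paper: the second inclusion is exactly the paper's argument (apply the preceding proposition to each minimal ideal $B_i$ and pass to the intersection $\bigcap_i Z_A(B_i) = Z_A(\sum_i B_i)$), and your first inclusion merely spells out the step the paper dismisses as ``clear,'' correctly using that abelian ideals are nilpotent and that sums of nilpotent ideals are nilpotent.
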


\begin{proof}
The first inclusion is clear.  By definition, $Soc(A) = \Sigma B_{i}$ where the $B_{i}$ are minimal ideals of $A$.  By the last proposition, $Nil(A) \subseteq Z_{A}(B_{i})$ for each $i$.  Then, $Nil(A) \subseteq \bigcap_{i} Z_A(B_{i}) = Z_{A} (\sum_{i} B_{i}) = Z_{A}(Soc(A))$.
\end{proof}

Let $A$ be a Leibniz algebra with $\Phi(A) = 0$.  If $W$ is an abelian ideal of $A$, then there exists a subalgebra $V$ of $A$ such that $A$ is the semidirect sum of $U$ and $V$ by Lemma 7.2 of \cite{towersfrat}.

\begin{theorem}\label{trip=}
Let $A$ be a Leibniz algebra with $\Phi(A) = 0$.  Then $Asoc(A) = Nil(A) = Z_{A}(Soc(A))$.
\end{theorem}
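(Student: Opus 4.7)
The plan is to complement Proposition \ref{SocReln}'s chain $Asoc(A) \subseteq Nil(A) \subseteq Z_A(Soc(A))$ with the two reverse inclusions $Nil(A) \subseteq Asoc(A)$ and $Z_A(Soc(A)) \subseteq Nil(A)$. Set $N = Nil(A)$ and $C = Z_A(Soc(A))$. The whole argument rests on one tool: under $\Phi(A)=0$ every abelian ideal of $A$ admits a subalgebra complement (Lemma 7.2 of \cite{towersfrat}). The rest is a sequence of ``contradict-the-complement'' moves.

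I would begin by showing $N$ is abelian. Suppose $N^2 \neq 0$. Every term $Z_i(N)$ of the upper central series of $N$ is an ideal of $A$ (a routine check from the Leibniz identity using that $N$ is an ideal), so a minimal ideal $T$ of $A$ contained in $N^2$ intersects each $Z_i(N)$ in $0$ or $T$. If $T \subseteq Z_k(N)$ with $k \geq 2$, then for any $t \in T$ and $n \in N$ both $tn$ and $nt$ lie in $T \cap Z_{k-1}(N)$, and minimality walks us down to $T \subseteq Z(N)$. Hence $T$ is abelian, so $A = T \dotplus V$. Writing $N = T \oplus (N \cap V)$ and using $T \subseteq Z(N)$, the cross terms in $N^2$ collapse, giving $N^2 = (N \cap V)^2 \subseteq V$. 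But then $T \subseteq N^2 \subseteq V$, contradicting $T \cap V = 0$.

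Now $N$ is abelian, so $A = N \dotplus V$. To get $N = Asoc(A)$, suppose $S := Asoc(A) \subsetneq N$ and pick an ideal $M$ of $A$ with $S \subsetneq M \subseteq N$ and $M/S$ minimal in $A/S$. Decompose $M = S \oplus (M \cap V)$. Because $N$ is abelian and $M \subseteq N$, for any $a = n + v \in N \dotplus V$ and $x \in M \cap V$ we have $ax = nx + vx = vx$, which sits in both $V$ (subalgebra) and $M$ (ideal); similarly for $xa$. Hence $M \cap V$ is a nonzero ideal of $A$. Any minimal ideal of $A$ inside it is abelian (being in $N$), so lies in $S$; but it also lies in $V$, and $S \cap V = 0$, contradiction. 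The inclusion $C \subseteq N$ follows the same template: write $C = N \oplus (C \cap V)$, and observe that $x \in C$ kills $Soc(A) \supseteq N$, so the $N$-part of any $a$ annihilates $x$ and $C \cap V$ is an ideal of $A$; a minimal ideal $T$ inside it centralizes itself (being in both $C$ and $Soc(A)$), hence is abelian, hence lies in $Asoc(A) = N$ and also in $V$, forcing $T = 0$.

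The main obstacle is the first step, where the Leibniz bookkeeping must be done carefully: both directions of multiplication have to be tracked when verifying that each $Z_i(N)$ is $A$-stable and that $T \subseteq Z_k(N)$ implies $tn, nt \in Z_{k-1}(N)$; neither is a free Lie-algebra manipulation. Once $N$ is known to be abelian, the remaining two inclusions are near-mirror images of each other and follow the same ``ideal meets complement equals zero'' motif, so essentially all of the novelty over the Lie case is concentrated in Step~1.
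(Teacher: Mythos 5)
Your overall mechanism --- complement an abelian ideal via Lemma 7.2 of \cite{towersfrat}, intersect a bigger ideal with the complement, and kill the resulting ideal by finding a minimal ideal inside it --- is exactly the engine of the paper's proof. But the paper runs it only once: since Proposition \ref{SocReln} already gives $Asoc(A) \subseteq Nil(A) \subseteq Z_A(Soc(A))$, the single inclusion $Z_A(Soc(A)) \subseteq Asoc(A)$ collapses the whole chain. The paper therefore complements $D = Asoc(A)$ directly (abelian for free, so Lemma 7.2 applies with no preparation), writes $C = D + (K\cap C)$ by the modular law, shows $E = K\cap C$ is an ideal, and notes that any minimal ideal $B \subseteq E$ satisfies $B^2 \subseteq C\cdot Soc(A) = 0$, hence lies in $D\cap K = 0$. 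Your Steps 1 and 2 (proving $Nil(A)$ abelian and then $Nil(A) = Asoc(A)$) are thus unnecessary detours; they exist only so that you may complement $N$ rather than $Asoc(A)$ in Step 3. Step 1 is correct as far as it goes (and could be shortened by citing the paper's Proposition that $Nil(A) \subseteq Z_A(B)$ for $B$ a minimal ideal, which immediately gives $T \subseteq Z(N)$), but it is work the paper's choice of complement avoids entirely.

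There is also a concrete error in Step 2 as written. You fix $V$ as a complement to $N$, i.e.\ $A = N \dotplus V$ with $N\cap V = 0$, and then assert the decomposition $M = S \oplus (M\cap V)$ for an ideal $M$ with $S \subsetneq M \subseteq N$. Since $M \subseteq N$, you have $M\cap V \subseteq N\cap V = 0$, so that decomposition reads $M = S$, contradicting $S \subsetneq M$; the modular-law identity $M = S + (M\cap V)$ requires $S + V = A$, i.e.\ $V$ must complement $S$, not $N$. The step is repairable --- $S = Asoc(A)$ is abelian, so it does admit a complement $V'$, and with $V'$ in place of $V$ the computation $ax = sx + vx = vx$ (using $sx = 0$ because $s, x \in N$ and $N$ is abelian) goes through and $M\cap V'$ is a nonzero ideal meeting $Asoc(A)$ trivially, the desired contradiction. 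But once you complement $S$ anyway, you may as well discard Steps 1--2 and argue directly that $Z_A(Soc(A)) \subseteq Asoc(A)$ as the paper does: any minimal ideal inside the complementary piece is abelian because it lies in both $Z_A(Soc(A))$ and $Soc(A)$, no prior knowledge of $Nil(A)$ required.
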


\begin{proof}
By Proposition \ref{SocReln}, it is enough to show that $Z_{A}(Soc(A)) \subseteq Asoc(A)$.  Let $C = Z_{A}(Soc(A))$ and $D = Asoc(A)$.  Since $D$ is abelian, $A$ is the semidirect sum of $D$ and a subalgebra $K$.  Then $D + (K \cap C) = (D + K) \cap C = A \cap C = C$.  Let $E = K \cap C$.  Since $C$ is an ideal, $E$ is invariant under multiplication by $K$ and is annihilated by $D$.  Therefore, $E$ is an ideal in $A$.  Let $B$ be a minimal ideal of $A$ contained in $E$.  Then $B^{2} \subseteq C \cdot Soc(A) = 0$.  Hence, $B \subset D$, which implies that $B \subseteq D \cap K = 0$.  Thus $E = 0$, so $C \subseteq D$.
\end{proof}

\begin{corollary}
\makebox[306pt][s]{ Let $A$ be a Leibniz algebra.  Then $Nil(A)$ $/$ $\Phi(A) \cong$} \\ 
%
%
$Asoc(A/\Phi(A))$.
\end{corollary}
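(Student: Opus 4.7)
The strategy is to apply Theorem \ref{trip=} to the quotient algebra $A/\Phi(A)$. For that we need $\Phi(A/\Phi(A)) = 0$, which is the standard Frattini-quotient fact: since $\Phi(A)$ lies in every maximal subalgebra of $A$, maximal subalgebras of $A/\Phi(A)$ correspond bijectively to maximal subalgebras of $A$, giving $F(A/\Phi(A)) = F(A)/\Phi(A)$; any ideal of $A/\Phi(A)$ inside this lifts to an ideal of $A$ inside $F(A)$ and must therefore lie in $\Phi(A)$. With $\Phi(A/\Phi(A)) = 0$ in hand, Theorem \ref{trip=} immediately gives $Asoc(A/\Phi(A)) = Nil(A/\Phi(A))$.

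The remaining step, which is the substantive one, is to identify $Nil(A/\Phi(A))$ with $Nil(A)/\Phi(A)$. For the easy direction, $\Phi(A)$ is a nilpotent ideal of $A$ (so in particular $\Phi(A) \subseteq Nil(A)$, making the quotient $Nil(A)/\Phi(A)$ a legitimate nilpotent ideal of $A/\Phi(A)$), which yields $Nil(A)/\Phi(A) \subseteq Nil(A/\Phi(A))$.

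For the reverse containment, I would let $N$ be the preimage in $A$ of $Nil(A/\Phi(A))$, so $N$ is an ideal of $A$ with $\Phi(A) \subseteq N$ and $N/\Phi(A)$ nilpotent. To conclude $N \subseteq Nil(A)$ I would invoke the Leibniz version of Barnes's extension result quoted in the introduction: if $W$ is an ideal of a Leibniz algebra $B$ contained in its Frattini subalgebra and $B/W$ is nilpotent, then $B$ is nilpotent. Applied with $B = N$ and $W = \Phi(A)$, this yields that $N$ is nilpotent and hence $N \subseteq Nil(A)$. Combining the two inclusions gives $Nil(A/\Phi(A)) = Nil(A)/\Phi(A)$, and chaining with the equality from Theorem \ref{trip=} produces the stated isomorphism.

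The main obstacle is the prerequisite needed to apply Barnes's extension to the subalgebra $N$: one must know that $\Phi(A)$ sits inside $F(N)$, i.e., that $\Phi(A) \cap N \subseteq \Phi(N)$ for ideals $N$ of $A$. This is a standard Frattini heredity fact in the Lie setting, and the corresponding statement in Barnes's Leibniz paper is what makes the argument go through cleanly; without it the extension-of-nilpotent argument would require separate justification.
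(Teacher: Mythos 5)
Your overall strategy---pass to $A/\Phi(A)$, apply Theorem \ref{trip=} there, and identify $Nil(A/\Phi(A))$ with $Nil(A)/\Phi(A)$---is the same as the paper's, which disposes of everything in one line by citing Theorem \ref{trip=} together with Theorem 5.5 of \cite{barnesleib}; that cited theorem is precisely the identification $Nil(A/\Phi(A)) = Nil(A)/\Phi(A)$ (together with the nilpotency of $\Phi(A)$). Your verification that $\Phi(A/\Phi(A)) = 0$ and your easy inclusion $Nil(A)/\Phi(A) \subseteq Nil(A/\Phi(A))$ are fine.

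The gap is in the reverse inclusion. You apply Barnes's nilpotency-extension theorem to the preimage $N$ of $Nil(A/\Phi(A))$ with $W = \Phi(A)$, and you correctly observe that this requires $\Phi(A) \subseteq F(N)$, equivalently $\Phi(A) \cap N \subseteq \Phi(N)$ for the ideal $N$. But this is not a standard heredity fact; it is false in general, even for nilpotent Lie algebras. In the Heisenberg algebra $L$ with basis $x,y,z$ and $xy = z$, the ideal $N$ spanned by $y$ and $z$ is two-dimensional abelian, so $F(N) = 0$, while $\Phi(L) = L^2$ is spanned by $z$ and lies inside $N$; thus $\Phi(L) \cap N \not\subseteq \Phi(N)$ even though $\Phi(L) \subseteq N$. (The heredity that does hold runs the other way: $\Phi(N) \subseteq \Phi(A)$ for ideals, under suitable hypotheses.) Consequently Barnes's extension theorem, as quoted in the introduction, cannot be applied to $N$ in the way you propose. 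The statement you actually need---if $W \subseteq \Phi(A)$ is an ideal of $A$ and $N$ is an ideal of $A$ containing $W$ with $N/W$ nilpotent, then $N$ is nilpotent---is a genuine strengthening whose standard proof goes through Cartan subalgebras and a Frattini-type argument rather than Frattini heredity, and it is exactly the content of the cited Theorem 5.5 of \cite{barnesleib}. Citing that theorem, as the paper does, closes the gap; your route as written does not.
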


\begin{proof}
This follows from the last result and Theorem 5.5 of \cite{barnesleib}.
\end{proof}

\begin{theorem}\label{AsocComp}
Let $A$ be a Leibniz algebra with $\Phi(A) = 0$.  Then $A = Asoc(A) \dotplus V$ where $V$ is a Lie algebra which is isomorphic to a subalgebra of the derivation algebra of Soc(A).
\end{theorem}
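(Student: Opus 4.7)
The plan is to invoke the decomposition result cited just before Theorem \ref{trip=} (Lemma 7.2 of \cite{towersfrat}), applied with the abelian ideal $W = Asoc(A)$. To see that $Asoc(A)$ is indeed abelian, note that for any two distinct abelian minimal ideals $B_i, B_j$ of $A$ we have $B_i B_j \subseteq B_i \cap B_j = 0$, so the sum is abelian. This yields a subalgebra $V$ of $A$ with $A = Asoc(A) \dotplus V$, which is the desired decomposition as vector spaces. The remaining work is to identify $V$ with a subalgebra of $Der(Soc(A))$ and, in particular, to show that $V$ is Lie.

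To this end, I would consider the map $\rho \colon V \to Der(Soc(A))$ sending $v$ to $L_v|_{Soc(A)}$, the restriction of left multiplication. This is well defined because $Soc(A)$ is an ideal of $A$ and because left multiplication by any element of a Leibniz algebra is a derivation; hence $L_v|_{Soc(A)}$ is a derivation of $Soc(A)$. Moreover, the Leibniz identity $a(bc) = (ab)c + b(ac)$ rearranges to $L_{ab} = [L_a, L_b]$, so $\rho$ is a homomorphism of Leibniz algebras into the Lie algebra $Der(Soc(A))$. Once injectivity of $\rho$ is established, $V$ is isomorphic as a Leibniz algebra to a subalgebra of a Lie algebra, which forces $V$ itself to be Lie.

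The main obstacle is establishing injectivity of $\rho$. A priori the kernel is $V \cap Z^l_A(Soc(A))$, which could be strictly larger than $V \cap Z_A(Soc(A))$. However, Theorem \ref{trip=} gives $Z_A(Soc(A)) = Asoc(A)$, and the semidirect decomposition gives $V \cap Asoc(A) = 0$; so it suffices to upgrade the one-sided annihilation to two-sided annihilation. This is exactly the content of Lemma \ref{Lem1.9}: for each minimal ideal $B$ of $A$, either $BA = 0$ (whence trivially $Bv = 0$) or $bv = -vb$ for all $b \in B$ (whence $vB = 0$ immediately forces $Bv = 0$). Writing $Soc(A)$ as a sum of its minimal ideals, we conclude $Soc(A)\cdot v = 0$ as well, so $v \in Z_A(Soc(A)) \cap V = Asoc(A) \cap V = 0$. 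Thus $\rho$ is injective and the theorem follows.
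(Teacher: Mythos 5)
Your proposal is correct and follows essentially the same route as the paper: complement $Asoc(A)$ via Lemma 7.2 of \cite{towersfrat}, map into $Der(Soc(A))$ by left multiplication, and use Lemma \ref{Lem1.9} together with Theorem \ref{trip=} to identify the kernel with $Asoc(A)$. The only addition is your explicit check that $Asoc(A)$ is abelian, which the paper leaves implicit.
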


\begin{proof}
$Asoc(A)$ is complemented in $A$ by a subalgebra $B$ by Lemma 7.2 of \cite{towersfrat}.  For $x \in A$ let $L_{x}$ and $R_{x}$ be left and right multiplication by $x$ on $Soc(A)$ and $L: A \rightarrow Der(Soc(A))$ be defined as $L(x) = L_{x}$.  $L$ is a homomorphism, $Im(L)$ is a Lie algebra of derivations of $Soc(A)$ and $ker(L) = Z_{A}^{l}(Soc(A))$.  $Soc(A)$ is the direct sum of minimal ideals of $A$.  Let $W$ be one of these minimal ideals.  When acting on $W$, either $R_{x} = 0$ for all $x \in A$ or $R_{x} = -L_{x}$ for all $x \in A$ by 
Lemma \ref{Lem1.9}.  Hence, $Z_{A}^{l}(Soc(A)) = Z_{A}(Soc(A)) = Asoc(A)$ and $Im(L) \cong A/Asoc(A) \cong V$.
\end{proof} 

\begin{corollary}
Let $A$ be a semisimple Leibniz algebra.  Then $A$ is a Lie algebra.
\end{corollary}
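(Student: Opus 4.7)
The plan is to apply Theorem \ref{AsocComp} directly, once we verify that the two hypotheses of that theorem, namely $\Phi(A)=0$ and the vanishing of $Asoc(A)$, are forced by semisimplicity. Under the standard interpretation, a semisimple Leibniz algebra is one whose solvable radical is zero, so every solvable (in particular, every abelian or nilpotent) ideal of $A$ is $0$.

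First I would observe that $Asoc(A)=0$: by definition $Asoc(A)$ is a sum of abelian minimal ideals, hence is itself an abelian ideal, so it is solvable and must lie in the radical, which is $0$. Next I would argue that $\Phi(A)=0$: the Frattini ideal $\Phi(A)$ is contained in the Frattini subalgebra $F(A)$, which by the Leibniz-algebra extension of Barnes' theorem (as invoked repeatedly in the paper and implicit in the nilpotency assertions of Section 1) is nilpotent; hence $\Phi(A)$ is a nilpotent, and in particular solvable, ideal of $A$, and semisimplicity forces it to be zero.

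With $\Phi(A)=0$ in hand, Theorem \ref{AsocComp} applies and gives a decomposition $A=Asoc(A)\dotplus V$ where $V$ is a Lie algebra isomorphic to a subalgebra of $\mathrm{Der}(Soc(A))$. Since $Asoc(A)=0$, this reduces to $A\cong V$, and therefore $A$ is itself a Lie algebra.

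The one point that needs a quick justification rather than being purely mechanical is the claim that $\Phi(A)$ is solvable for a Leibniz algebra; everything else is bookkeeping on top of Theorem \ref{AsocComp}. If this nilpotency of $\Phi(A)$ is not to be quoted from \cite{barnesleib}, an alternative route is to note that $\Phi(A)$ acts trivially on the chief factors of $A$ (by the standard Frattini argument recalled in the paper), so $\Phi(A)$ is nilpotent as an ideal, which again lands it inside the radical. Either way, semisimplicity kills $\Phi(A)$, and the theorem then delivers the conclusion in a single line.
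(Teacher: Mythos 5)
Your proposal is correct and follows essentially the same route as the paper: both arguments kill $\Phi(A)$ by noting it is a nilpotent (hence solvable) ideal via Theorem 5.5 of \cite{barnesleib}, observe that $Asoc(A)=0$ by semisimplicity, and then read off from Theorem \ref{AsocComp} that $A$ coincides with the Lie-algebra complement $V$. The only cosmetic difference is that you derive $Asoc(A)=0$ directly from the vanishing of the radical rather than routing through $Z_A(Soc(A))$, which changes nothing of substance.
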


\begin{proof}
$\Phi(A)$ is nilpotent by Theorem 5.5 of \cite{barnesleib}; hence $\Phi(A) = 0$.  Furthermore, $Z_{A}(Soc(A)) = Asoc(A) = 0$.  Hence, $K$ in Theorem \ref{AsocComp} is equal to $A$.
\end{proof}

From this corollary, we immediately obtain the following extension of the classical Lie algebra result.

\begin{corollary}
Let $A$ be a semisimple Leibniz algebra over a field of characteristic zero.  Then $A$ is the direct sum of ideals that are simple Lie algebras.
\end{corollary}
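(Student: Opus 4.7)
The plan is to bootstrap off the immediately preceding corollary, which does the essential work: under the semisimplicity hypothesis it forces $A$ to be a Lie algebra. Once that is in hand, the result is just the classical Cartan decomposition theorem for semisimple Lie algebras in characteristic zero, so my job is really to check that the Leibniz version of ``semisimple'' feeds correctly into the classical Lie setup.

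First, I would apply the previous corollary to conclude that $A$ is a Lie algebra. Next, I would verify that the Leibniz semisimplicity used in the paper specializes, for a Lie algebra, to the usual notion (vanishing solvable radical). From the proof of the previous corollary the operative conditions are $\Phi(A) = 0$ and $Asoc(A) = Z_A(Soc(A)) = 0$. In the Lie algebra context in characteristic zero, $Asoc(A) = 0$ already forces the solvable radical $R$ to be zero: otherwise, the last nonzero term of the derived series of $R$ would be a characteristic, hence $A$-invariant, nonzero abelian ideal contained in $Asoc(A)$, a contradiction. So $A$ is a semisimple Lie algebra in the classical sense.

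Having reduced to that setting, I would invoke the classical structure theorem (Cartan--Weyl): a semisimple Lie algebra over a field of characteristic zero is a direct sum of simple Lie ideals. Since each summand is both an ideal of $A$ and a simple Lie algebra, this is precisely the required conclusion.

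The only point that requires any argument at all is the second one --- making sure that ``semisimple'' in the Leibniz sense used throughout Section 2 translates to ``semisimple'' in the Lie sense once we know $A$ is Lie. That step is the small conceptual obstacle; everything else is a citation. I do not expect any genuine computational difficulty, because the hard combinatorial content (that the product is actually skew-symmetric, so $A$ is Lie) has already been absorbed into the previous corollary via Theorem \ref{AsocComp} and Lemma \ref{Lem1.9}.
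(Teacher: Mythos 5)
Your proposal is correct and follows the same route as the paper, which simply notes that the result is immediate from the preceding corollary (forcing $A$ to be a Lie algebra) together with the classical structure theorem for semisimple Lie algebras in characteristic zero. Your extra check that the Leibniz notion of semisimplicity (vanishing solvable radical, hence $Asoc(A)=0$) coincides with the classical Lie notion is a reasonable point of care, but it is not an additional idea beyond what the paper takes for granted.
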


\begin{corollary}
Let $A$ be a Leibniz algebra over a field of characteristic zero with $\Phi(A) = 0$.  Then $A=Asoc(A)\dotplus V$, where $V = S \oplus Z(V)$ and $S$ is a semisimple Lie algebra.
\end{corollary}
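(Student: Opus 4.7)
The plan is to apply Theorem \ref{AsocComp} to write $A = Asoc(A) \dotplus V$ with $V$ a Lie algebra, and then reduce the remaining claim ``$V = S \oplus Z(V)$ with $S$ semisimple'' to the classical fact that a finite-dimensional Lie algebra over a field of characteristic zero which admits a faithful completely reducible representation is reductive.

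First, I would observe from the proof of Theorem \ref{AsocComp} that the map $L \colon V \to Der(Soc(A))$ given by left multiplication is injective: its kernel on all of $A$ is $Asoc(A)$, and $V \cap Asoc(A) = 0$. So left multiplication realizes $V$ as a faithful representation on $Soc(A)$.

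Next, I would show that $Soc(A)$ is a completely reducible $V$-module. By definition $Soc(A) = \bigoplus B_i$ with the $B_i$ minimal ideals of $A$, so it suffices to check that each $B_i$ is irreducible as a $V$-module. Because $Asoc(A) \subseteq Z_A(Soc(A))$ by Proposition \ref{SocReln}, the $A$-action on $Soc(A)$ via left multiplication factors through $V$, so any $V$-submodule $U \subseteq B_i$ is automatically $A$-invariant under left multiplication. Lemma \ref{Lem1.9} applied to $B_i$ says that right multiplication by $A$ on $B_i$ is either zero or the negative of left multiplication, so $U$ is stable under right multiplication as well, and is therefore an ideal of $A$ contained in $B_i$; minimality forces $U = 0$ or $U = B_i$.

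With faithfulness and complete reducibility in hand, the classical theorem gives that $V$ is reductive, i.e., $V = [V,V] \oplus Z(V)$ with $[V,V]$ semisimple, and setting $S = [V,V]$ yields the desired decomposition. The principal obstacle is the irreducibility step just described: one has to pass back and forth between $V$-submodules, $A$-submodules under left multiplication, and ideals of $A$, and it is precisely Lemma \ref{Lem1.9} that makes this bridge work cleanly; the rest of the argument is just assembling existing inputs.
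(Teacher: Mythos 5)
Your proposal is correct and follows essentially the same route as the paper: both decompose $A$ via Theorem \ref{AsocComp}, use Lemma \ref{Lem1.9} to show that the minimal ideals of $A$ remain irreducible under the left action (so that $V$ acts faithfully and completely reducibly on $Soc(A)$), and then invoke the classical characteristic-zero theorem that such a Lie algebra of linear transformations is reductive (the paper cites this as Theorem 11, Chapter 2 of Jacobson). Your write-up simply fills in more of the details, particularly why a $V$-submodule of a minimal ideal is automatically an ideal of $A$.
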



\begin{proof}
By Theorem \ref{AsocComp}, $A = Asoc(A) \dotplus V$ where $V$ is a Lie subalgebra of $A$ that is isomorphic to a subalgebra $D$ of the derivation algebra of $Soc(A)$ and $D$ consists of left multiplication by elements of $A$.  Each minimal ideal is a minimal left ideal since right multiplication by $x \in A$ restricted to the minimal ideal is either 0 for all $x \in A$ or is the negative of left multiplication by $x$ for all $x \in A$ by 
Lemma \ref{Lem1.9}.  Hence, $D$ acts completely reducibly on $Soc(A)$.  Therefore, $D$ has the desired form by Theorem 11, chapter 2 of \cite{jacobson}.
\end{proof}

\begin{theorem}
Let $A$ be a Leibniz algebra over a field of characteristic 0.  Then $F(A)$ is an ideal in $A$.
\end{theorem}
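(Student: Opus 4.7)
The plan is to reduce the theorem to showing that if $\Phi(A) = 0$ then $F(A) = 0$. Since $\Phi(A) \subseteq F(A)$ is contained in every maximal subalgebra of $A$, the standard correspondence of subalgebras containing $\Phi(A)$ with subalgebras of $A/\Phi(A)$ gives $F(A)/\Phi(A) = F(A/\Phi(A))$; so proving $F(A) = 0$ in the case $\Phi(A) = 0$ will imply $F(A) = \Phi(A)$ in general, which is an ideal. Assume $\Phi(A) = 0$; by the previous corollary, $A = Asoc(A) \dotplus V$ with $V = S \oplus Z(V)$, $S$ a semisimple Lie algebra, and $Asoc(A) = B_1 \oplus \cdots \oplus B_n$ a direct sum of minimal abelian ideals of $A$.

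I will exhibit two families of maximal subalgebras to force $F(A) \subseteq F(V) = 0$. First, set $K_i = \sum_{j \neq i} B_j + V$. Any subalgebra $N$ with $K_i \subseteq N \subseteq A$ decomposes as $N = K_i + (N \cap B_i)$, and $N \cap B_i$ is an ideal of $A$ because $B_i$ is abelian and $K_i$-invariant (and $B_i$ is already an $A$-ideal); minimality of $B_i$ then forces $N = K_i$ or $N = A$, so each $K_i$ is maximal and $F(A) \subseteq \bigcap_i K_i = V$. Second, for each maximal subalgebra $M$ of $V$, the sum $M + Asoc(A)$ is maximal in $A$: any intermediate $N$ decomposes as $N = Asoc(A) + (N \cap V)$, with $N \cap V$ a subalgebra of $V$ containing $M$, hence equal to $M$ or $V$. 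Therefore $F(A) \subseteq M + Asoc(A)$ for every such $M$. Combining this with $F(A) \subseteq V$ and $V \cap Asoc(A) = 0$ yields $F(A) \subseteq (M + Asoc(A)) \cap V = M$ for every maximal $M$ of $V$, so $F(A) \subseteq F(V)$.

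It remains to observe that $F(V) = 0$. The maximal subalgebras of $V = S \oplus Z(V)$ split into two types: those containing $Z(V)$ are $M_S \oplus Z(V)$ for $M_S$ maximal in $S$; those not containing $Z(V)$ yield an abelian quotient in $V$ and therefore contain $[V,V] = S$, making them hyperplanes of the form $S \oplus H$ with $H$ a hyperplane of $Z(V)$. Intersecting the first family yields $F(S) \oplus Z(V) = Z(V)$ (since $F(S) = 0$ for the semisimple Lie algebra $S$ in characteristic zero), while intersecting the second gives $S$, so $F(V) \subseteq Z(V) \cap S = 0$. The main technical hurdle is verifying the maximality claims, particularly that $N \cap B_i$ is an $A$-ideal (which relies on both the abelianness of $B_i$ and its being an $A$-ideal already) and the complete enumeration of maximal subalgebras of the reductive Lie algebra $V$.
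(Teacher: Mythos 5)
Your proof is correct and follows essentially the same route as the paper: reduce to the case $\Phi(A)=0$ via the correspondence $F(A/\Phi(A))=F(A)/\Phi(A)$, decompose $A=Asoc(A)\dotplus V$, and use the two families of maximal subalgebras $V+\sum_{j\neq i}B_j$ and $M+Asoc(A)$ to squeeze $F(A)$ into $V\cap Asoc(A)=0$. The only cosmetic difference is that you verify $F(V)=0$ by enumerating maximal subalgebras of the reductive Lie algebra $V=S\oplus Z(V)$, where the paper simply invokes $F(V)=\Phi(V)=0$ for Lie algebras in characteristic zero.
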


\begin{proof}
If $B$ is an ideal in $A$ and $B$ is contained in $F(A)$, then $F(A/B) = F(A) / B$ by Proposition 4.3 of \cite{towersfrat}.  Hence it is enough to show the result when $B = \Phi(A) = 0$.  In this case, we need only show that $F(A) = 0$.  Then $A$ can be written as in Theorem \ref{AsocComp} and we use the notation that is used there.  Since $V$ is a Lie algebra, $F(V) = \Phi(V)$.  Furthermore, $V$ is the direct sum of ideals, each of which is simple or one dimensional.  Clearly $\Phi(V) = 0$.  Hence $F(A) \subseteq Asoc(A)$.  $Asoc(A)$ is the direct sum of minimal ideals $B_i$ of $A$, each of which is abelian.  Each $B_i$ is complemented by the maximal subalgebra $C_i$ which is the direct sum of $V$ and the remaining $B_j$.  Then $\cap C_i \subseteq V$ and $F(A) \subseteq V \cap Asoc(A) = 0$.  Therefore $F(A) = 0$.
\end{proof}


\section{Elementary Leibniz Algebras}

A Leibniz algebra $A$ will be called $elementary$ if the Frattini ideal of every subalgebra of $A$ is 0.  The analogous concept has been studied in both group theory and Lie algebras.

\begin{proposition}\label{A2comp}
Let $A$ be a Leibniz algebra such that $A^{2}$ is nilpotent.  Then the following are equivalent:
	\begin{enumerate}
		\item $\Phi(A) = 0$.
		\item $Nil(A) = Soc(A)$, and $Nil(A)$ is complemented by a subalgebra $C$.
		\item $A^{2}$ is abelian, is a semisimple $A$-module, and is complemented by a subalgebra $D$.
	\end{enumerate}
Furthermore, the complements to $A^{2}$ are precisely the Cartan subalgebras of $A$.
\end{proposition}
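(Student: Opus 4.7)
The plan is to establish the cycle $(1) \Rightarrow (2) \Rightarrow (3) \Rightarrow (1)$ and then the Cartan characterization. For $(1) \Rightarrow (2)$, I would apply Theorem~\ref{trip=} to obtain $Nil(A) = Asoc(A) = Z_A(Soc(A))$, so it suffices to show $Soc(A) = Asoc(A)$. Given a minimal ideal $B$ of $A$: if $B \cap A^2 = 0$, then $BA \subseteq B \cap A^2 = 0$ and $AB = 0$ force $B^2 = 0$; if $B \subseteq A^2 \subseteq Nil(A)$, then $B$ is nilpotent, and the $A$-ideal $B^2 \subseteq B$ must vanish by minimality. Thus every minimal ideal is abelian, and Lemma~7.2 of~\cite{towersfrat} provides a subalgebra complement $C$ to $Nil(A) = Asoc(A)$.

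For $(2) \Rightarrow (3)$, I would first note that $Nil(A) = Soc(A)$ is abelian, because for distinct minimal ideals $B_i, B_j$ one has $B_i B_j \subseteq B_i \cap B_j = 0$; in particular $A^2 \subseteq Nil(A)$ is abelian. As an $A$-submodule of the semisimple $A$-module $Nil(A)$, $A^2$ is itself semisimple and splits as $Nil(A) = A^2 \oplus E$ for some $A$-ideal $E$, which is central since $EA \subseteq E \cap A^2 = 0$ (and likewise $AE = 0$). The complement $C$ from (2) is abelian because $C^2 \subseteq C \cap A^2 = 0$, so $D := E + C$ is an abelian subalgebra with $D + A^2 = A$ and $D \cap A^2 = 0$.

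For $(3) \Rightarrow (1)$, I would show every minimal ideal $B$ of $A$ is missed by some maximal subalgebra. If $B \cap A^2 = 0$, then $B$ embeds into the abelian quotient $A/A^2$; a hyperplane of $A/A^2$ not containing the image of $B$ pulls back to a codimension-one ideal of $A$, hence a maximal subalgebra, missing $B$. If $B \subseteq A^2$, the semisimplicity of $A^2$ gives $A^2 = B \oplus B'$ as $A$-modules, and in $A/B'$ one has $A/B' = B \dotplus D''$ with $D'' := (D + B')/B'$ a subalgebra. A direct check shows $D''$ is maximal: for any subalgebra $M$ with $D'' \subseteq M \subseteq A/B'$, the subspace $M \cap B$ is an $A/B'$-ideal of $B$ (using that $B$ is abelian to kill $B^2$-terms), hence $0$ or $B$ by minimality. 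Lifting gives the required maximal subalgebra of $A$ missing $B$.

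For the Cartan characterization, a subalgebra complement $D$ to $A^2$ is abelian, since $D^2 \subseteq D \cap A^2 = 0$, and hence nilpotent. For self-normalization, writing $a = a_1 + d \in N_A(D)$ with $a_1 \in A^2$ and $d \in D$, one finds $a_1 D = D a_1 \subseteq A^2 \cap D = 0$, and combined with $A^2 \cdot A^2 = 0$ this forces $a_1 \in Z(A) \cap A^2$. The pivotal step is proving $Z(A) \cap A^2 = 0$ under $\Phi(A) = 0$: if $0 \neq b \in Z(A) \cap A^2$ lay outside some maximal subalgebra $M$, writing $b = \sum c_i d_i \in A^2$ and expanding each $c_i = m_i + \lambda_i b$, $d_i = m_i' + \mu_i b$ modulo $M$, the centrality of $b$ kills every cross term and collapses the sum to $b = \sum m_i m_i' \in M^2 \subseteq M$, a contradiction; hence $\langle b \rangle \subseteq \Phi(A) = 0$. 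The converse, that every Cartan subalgebra $H$ complements $A^2$, follows from the standard fact (analogous to the Lie case) that the image of a Cartan is a Cartan of the quotient, so $(H+A^2)/A^2 = A/A^2$ since $A/A^2$ is abelian and its own Cartan, together with a dimension comparison against the Cartan complement $D$ to force $H \cap A^2 = 0$. I expect the main obstacle to be this self-normalization step, and in particular the argument that $Z(A) \cap A^2 = 0$, which requires the centrality trick above.
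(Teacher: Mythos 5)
Your cycle $(1)\Rightarrow(2)\Rightarrow(3)\Rightarrow(1)$ is essentially sound and close to the paper's route: the paper gets $(1)\Rightarrow(2)$ in one line from Theorem \ref{trip=} plus solvability of $A$ (so $Soc(A)=Asoc(A)$), handles $(2)\Rightarrow(3)$ exactly as you do, and does $(3)\Rightarrow(1)$ more directly by choosing an $A$-invariant complement $B$ to $\Phi(A)$ inside the semisimple module $A^{2}$ and observing that the subalgebra $B+D$ supplements $\Phi(A)$, which already forces $\Phi(A)=0$; your detour through ``each minimal ideal is avoided by some maximal subalgebra'' works but is longer than needed. Your argument that a complement $D$ to $A^{2}$ is a Cartan subalgebra is also fine, and your centrality trick for $Z(A)\cap A^{2}=0$ is a legitimate, self-contained alternative to the paper's route, which instead deduces $Nil(A)=A^{2}\oplus Z(A)$ from the fact that each minimal ideal $B_i$ in $Asoc(A)$ satisfies $AB_i+B_iA=0$ or $AB_i+B_iA=B_i$. (A small nit: in $(2)\Rightarrow(3)$ your reason that $Soc(A)$ is abelian only covers products of \emph{distinct} minimal ideals; for a single minimal ideal $B\subseteq Nil(A)$ you still need $B^{2}=0$, which follows because $B^{2}$ is an $A$-ideal properly contained in the nilpotent $B$.)

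The genuine gap is in the converse Cartan direction, that every Cartan subalgebra $H$ satisfies $H\cap A^{2}=0$. After $A=H+A^{2}$ you invoke ``a dimension comparison against the Cartan complement $D$.'' That presupposes that any two Cartan subalgebras of $A$ have the same dimension, i.e., a conjugacy or equal-dimension theorem for Cartan subalgebras. Nothing of the sort is available here: the proposition is stated over an arbitrary field and for Leibniz algebras, where conjugacy of Cartan subalgebras is established only in special settings (e.g., over $\mathbb{C}$ in \cite{omirov}), so as written this step does not go through. The paper's argument avoids it entirely: since $A^{2}$ is an abelian ideal, $H\cap A^{2}$ is an ideal of $A$, hence (as a submodule of the semisimple module $A^{2}$) a sum of minimal ideals of $A$; any such minimal ideal lies inside the nilpotent algebra $H$ and so, by Lemma \ref{min nilp}, meets $Z(H)$ nontrivially; and since $A=H+A^{2}$ with $A^{2}$ abelian, $Z(H)\cap A^{2}\subseteq Z(A)$, contradicting $Z(A)\cap A^{2}=0$. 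Replace the dimension count with this argument and the proof is complete.
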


\begin{proof}
Assume that 1 holds.  $\Phi(A) = 0$.  Then $Nil(A) = Asoc(A) = Z_{A}(Soc(A)))$.  Since $A$ is solvable, $Asoc(A) = Soc(A)$.  Hence $Nil(A)$ is abelian and is complemented by a subalgebra.  Hence 2 holds.

Assume that 2 holds.  Since $A$ is solvable, $Nil(A) = Soc(A) = Asoc(A)$ and, since $A^{2}$ is nilpotent, $A^{2} \subseteq Nil(A)$.  Hence, $A^{2}$ is abelian.  Since $A$ acts completely reducibly on $Asoc(A)$, $A$ acts completely reducibly on $A^{2}$, and $A^{2}$ is complemented in $Asoc(A)$ by an ideal $B$.  Now $B + C$ complements $A^{2}$, and 3 holds.

Assume that 3 holds.  Since $\Phi(A) \subseteq A^{2}$ always holds, there exists an $A$-invariant subspace $B$ which complements $\Phi(A)$ in $A^{2}$.  Then $B + D$ complements $\Phi(A)$ in $A$.  Hence $\Phi(A) = 0$.

Under the conditions of the proposition, $Nil(A) = A^{2} \oplus Z(A)$ and both $A^{2}$ and $Z(A)$ are the direct sums of minimal ideals of $A$.  If $B$ is one of these minimal ideals, then $B$ is central or $AB + BA = B$.

If $H$ is a Cartan subalgebra of $A$, then $A = H + A^{2}$.  $H \cap A^{2}$ is an ideal in $A$ since $A^{2}$ is an abelian ideal.  Hence $H \cap A^{2}$ is the direct sum of minimal ideals of $A$.  If $B$ is one of them, then $B$ is a minimal ideal of $H$ which is nilpotent.  Hence $B \subseteq Z(A) \cap A^{2} = 0$.  Thus, $H \cap A^{2} = 0$ and $H$ is a complement to $A^{2}$ in $A$.

Conversely, let E be a complement to $A^{2}$ in $A$.  Let $x \in N_{A}(E) \cap A^{2}$ where $N_{A}(E)$ is the normalizer of $E$ in $A$.  Then $xE, Ex \subseteq E \cap A^{2} = 0$.  Therefore, $x \in Z(A) \cap A^{2} = 0$.  Hence $N_{A}(E) = E$ and $E$ is a Cartan subalgebra of $A$.
\end{proof}

It is not always the case that if $\Phi(A) = 0$ that $\Phi(M) = 0$ when $M$ is a subalgebra of $A$, even if $A$ is solvable as the following example shows.

\begin{example}\label{HeisEx}
Let $F$ be a field of characteristic $p$ where $p$ is prime and $V$ be a vector space with basis ${e_{1}, ..., e_{p}}$.  Define:

$x(e_{j}) = e_{j+1}$ with subscripts mod $p$

$y(e_{j}) = (j + 1)e_{j - 1}$ with subscripts mod $p$

$z(e_{j}) = e_{j}$.

Then $[y, x] = z$ and the other commutators between $x, y,$ and $z$ are 0.

Let $H$ be the three-dimensional Lie algebra with basis ${x, y, z}$.  Let $L=V\dotplus H$ with $[h, v] = h(v)$.  $L$ is a solvale Lie algebra.  Let $K = (I + R_{e_{2}})H$.  $R_{e_{2}}$ is a derivation since $L$ is a Lie algebra; hence, $I + R_{e_{2}}$ is an automorphism, and $K$ is a subalgebra with basis ${x + 3e_{1}, y + e_{3}, z + e_{2}}$.  It is easily checked that $H$ and $K$ are maximal subalgebras with $H \cap K = 0$.  Hence $\Phi(L) = 0$ while $\Phi(H) \neq 0$.
\end{example}

The Frattini subalgebra of a solvable Lie algebra is an ideal \cite{barnesgh}.  This result does not carry over to Leibniz algebras.  We continue with the construction in Example \ref{HeisEx}.

\begin{example}\label{CounterEx}
Let $F$, $V$, and $H$ be as in Example \ref{HeisEx}, with left multiplication of $H$ on $V$ the same as in that example, but $VH = 0$, and let $A$ be the Leibniz algebra $A = V \dotplus H$.  Then $Z_A(V) = Z_A^l(V) = V$.  Since $VA = 0$, $V \subseteq Z^l(A)$, the left center of $A$.  If $xA = 0$, then $xV = 0$.  Hence $Vx = 0$ and $x \in Z_A(V)$.  Hence $x\in V$ and $V = Z^l(A)$, since $V$ is a minimal ideal in $A$.  By Lemma 5.12 of \cite{barnesleib}, $V$ is complemented by a unique subalgebra in $A$, which is $H$.  $H$ is a maximal subalgebra of $A$ and every other maximal subalgebra of $A$ contains $V$.  These subalgebras are precisely those of the form $V\dotplus M$ where $M$ is maximal in $H$.  Let $\Omega$ be the set of all maximal subalgebras in $H$.  Then $F(A) = H \cap [\cap_{M\in\Omega} (V\dotplus M)] = H \cap [ V \dotplus \cap_{M\in\Omega} M] = \cap_{M\in \Omega} M = F(H) = Fz$, which is not an ideal in $A$.
\end{example}

\begin{theorem}\label{A2nilp-Aelem}
Let $A$ be a Leibniz algebra with $\Phi(A) = 0$ and $A^{2}$ nilpotent.  Then $A$ is elementary.
\end{theorem}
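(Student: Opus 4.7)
The plan is to verify condition~(3) of Proposition~\ref{A2comp} for every subalgebra $M \subseteq A$, namely that $M^{2}$ is abelian, a semisimple $M$-module, and complemented in $M$ by a subalgebra. Since $M^{2} \subseteq A^{2}$ is abelian (hence nilpotent) by Proposition~\ref{A2comp}, that proposition is available for $M$, so verifying (3) will yield $\Phi(M)=0$. Abelianness is immediate, leaving semisimplicity and the existence of a subalgebra complement.

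For semisimplicity I would write $A = A^{2} \dotplus D$ with $D$ a Cartan complement, necessarily abelian since $D \cong A/A^{2}$. Let $\pi : A \to D$ be the projection along $A^{2}$ and $\bar{M}=\pi(M)$; because $A^{2}\cdot A^{2}=0$, the $M$-action on $A^{2}$ factors through $\bar{M}$. Decompose $A^{2} = \bigoplus B_{i}$ into minimal $A$-ideals; each $B_{i}$ is an irreducible $D$-module, and because $D$ is abelian a Schur-type argument realizes $B_{i}$ as a $1$-dimensional module over a finite field extension $K_{i}/F$ on which $D$ acts by $K_{i}$-scalars. Restricting to $\bar{M}$ produces a subfield $K_{i}'\subseteq K_{i}$, and viewing $B_{i}$ as a $K_{i}'$-vector space exhibits it as a direct sum of simple $\bar{M}$-modules. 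Hence each $B_{i}$, and therefore $A^{2}$ together with its $\bar{M}$-submodule $M^{2}$, is semisimple as an $M$-module.

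For the subalgebra complement I would take a Cartan subalgebra $H$ of the solvable algebra $M$. Because $M/M^{2}$ is abelian, $H$ surjects onto $M/M^{2}$, so $H+M^{2}=M$. If $H \cap M^{2}$ were nonzero, Lemma~\ref{min nilp} applied to the nilpotent $H$ would give a nonzero $b \in Z(H) \cap M^{2}$; since $b \in A^{2}$ also commutes with $M^{2}$ (as $A^{2}\cdot A^{2}=0$), such a $b$ would lie in $Z(M) \cap M^{2}$. The complement step thus reduces to showing $Z(M) \cap M^{2}=0$.

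This final reduction is the main obstacle. Given $z = \sum z_{i} \in Z(M) \cap M^{2}$ with $z_{i} \in B_{i}$, projecting $Mz=zM=0$ onto each $B_{i}$ yields $Mz_{i}=z_{i}M=0$. The set $B_{i}^{M}=\{b \in B_{i} : Mb=bM=0\}$ contains $z_{i}$, so is nonzero whenever $z_{i} \neq 0$; using the Leibniz identity together with $A^{2}\cdot A^{2}=0$ and $D \cdot D=0$, a short computation shows $B_{i}^{M}$ is closed under both left and right multiplication by $D$, making it an $A$-submodule of the minimal ideal $B_{i}$ and forcing $B_{i}^{M}=B_{i}$. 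Thus $\bar{M}$ annihilates $B_{i}$ on both sides. For $m, n \in M$ decomposed as $m=m_{a}+m_{d}$, $n=n_{a}+n_{d}$ along $A^{2} \dotplus D$, the same vanishings collapse the product to $mn=m_{a}n_{d}+m_{d}n_{a}$, whose $B_{i}$-component is $m_{a,i}n_{d}+m_{d}n_{a,i}=0$ because $m_{d},n_{d} \in \bar{M}$. So $M^{2}$ has zero $B_{i}$-component, contradicting $z_{i} \neq 0$ and completing the proof.
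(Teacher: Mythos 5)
Your proposal is correct, and it shares the paper's overall strategy---namely, reducing everything to condition (3) of Proposition~\ref{A2comp} applied to the subalgebra $M$---but the execution is genuinely different in two places. The paper splits into the cases $A^{2} \subseteq M$ and $A^{2} \not\subseteq M$, handling the latter by passing to $M + A^{2}$ and invoking Lemma~\ref{NestedCartan} so that a Cartan subalgebra of $M$ is simultaneously a Cartan subalgebra of $A$, hence a complement to $A^{2}$; you avoid both the case split and Lemma~\ref{NestedCartan} by taking a Cartan subalgebra $H$ of $M$ itself and proving directly that $H \cap M^{2} = 0$, via the reduction to $Z(M) \cap M^{2} = 0$ and the observation that the two-sided annihilator $B_{i}^{M}$ is an $A$-ideal inside the minimal ideal $B_{i}$ (the Leibniz identity together with $A^{2}A^{2} = D^{2} = 0$ and Lemma~\ref{Lem1.9} does make that computation close up, and the final component calculation $mn = m_{a}n_{d} + m_{d}n_{a}$ checks out). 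You also spell out, via the Schur-type identification of each $B_{i}$ with a field extension $K_{i}$ on which the abelian complement $D$ acts by scalars, why complete reducibility of $D$ on $A^{2}$ is inherited by $\bar{M} = \pi(M)$; the paper asserts the corresponding step (that a subalgebra of the abelian Cartan complement still acts completely reducibly) without proof, so your argument supplies a justification the paper leaves implicit. Both routes arrive at the same endpoint ($M^{2}$ abelian, semisimple as an $M$-module, and complemented by a subalgebra), but yours is uniform in $M$ and self-contained modulo Proposition~\ref{A2comp} and the standard facts that solvable Leibniz algebras have Cartan subalgebras and that $H + M^{2} = M$ for such an $H$ (a fact the paper also uses without proof).
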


\begin{proof}
Let $M$ be a subalgebra of $A$.  Suppose that $A^{2} \subseteq M$.  Since $A^{2}$ is nilpotent, $\Phi(A^{2})$ is equal to the derived algebra of $A^{2}$, and then $\Phi(A^{2}) \subseteq \Phi(A) = 0$ because the derived algebra of $A^{2}$ is an ideal in $A$ and by Lemma 4.1 of \cite{towersfrat}.  Thus, $A^{2}$ is abelian and $A = A^{2} \dotplus H$, where $H$ is a Cartan subalgebra of $A$ by Proposition \ref{A2comp}.  Then $M = M \cap A = M \cap (A^{2} \dotplus H) = A^{2} \dotplus (M \cap H)$.  $A$ acts completely reducibly on $A^{2}$ and since $A^{2}$ is abelian, $H$ acts completely reducibly on $A^{2}$ as well.  Since $H$ is abelian, $H \cap M$ acts completely reducibly on $A^{2}$ and then also on $M^{2}$.  Thus, $M$ acts completely reducibly on $M^{2}$.  Furthermore, $M = A^{2} \dotplus (M \cap H) = B \oplus M^{2} \dotplus (M \cap H)$ for some ideal $B$ in $M$.  Now $B \oplus (M \cap H)$ is a complementary subalgebra to $M^{2}$ in $M$.  Thus, part 3 of Proposition \ref{A2comp} holds, and $\Phi(M) = 0$.

Suppose that $M$ does not contain $A^{2}$.  Then $M + A^{2}$ falls in the preceding case.  Hence we may assume that $M + A^{2} = A$.  Since $A^{2}$ is abelian, $A^{2} \cap M$ is an ideal in $A$.  Then $M/(A^{2} \cap M)$  complements $A^{2}/(A^{2} \cap M)$ in $A/(A^{2} \cap M)$ and $M/(A^{2} \cap M)$ acts completely reducibly on $A^{2}/(A^{2} \cap M)$.  Hence $M/(A^{2} \cap M)$ is a Cartan subalgebra of $A/(A^{2} \cap M)$ by the last proposition.  Let $H$ be a Cartan subalgebra of $M$.  By Lemma \ref{NestedCartan}, $H$ is a Cartan subalgebra of $A$.  Therefore, $H$ is a complement of $A^{2}$ in $A$.  Furthermore $M = M \cap (H + A^{2}) = H + (A^{2} \cap M)$ since $H \subseteq M$.  Therefore, $H$ is a complement to $A^{2} \cap M$ in $M$.  $A = A^{2} + M$ acts completely reducibly on $A^{2}$, hence $M$ does also since $A^{2}$ is abelian.  Therefore $M$ acts completely reducibly on $M^{2}$ and $A^{2} \cap M = M^{2} \oplus (A^{2} \cap Z(M))$.  Since $Z(M) \subseteq H$, it follows that $Z(M) \cap A^{2} = 0$.  Hence $H$ is a complement to $M^{2}$ in $M$.  Now $M$ satisfies part 3 of Proposition \ref{A2comp} and $\Phi(M) = 0$.
\end{proof}

As in Lie algebras, there is a converse to Theorem \ref{A2nilp-Aelem}.  The Lie algebra result is shown in \cite{towers07}.  We show the extension to Leibniz algebras in Theorem \ref{Aelem-A2nilp}.  The following is a direct extension of a Lie algebra result and the proof is the same as in the Lie algebra case as shown in Proposition 2 and Theorem 2 in \cite{stitz2}.

\begin{theorem}\label{FratSubset}
Let $A$ be a Leibniz algebra such that $A^{2}$ is nilpotent.  For any subalgebra $M$ of $A$, $\Phi(M) \subseteq \Phi(A)$.
\end{theorem}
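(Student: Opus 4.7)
My plan is to reduce to the case $\Phi(A)=0$ and then apply Theorem~\ref{A2nilp-Aelem}.

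First I would pass to the quotient $\bar A = A/\Phi(A)$. Since $\Phi(A)$ is an ideal of $A$ contained in $F(A)$, Proposition~4.3 of \cite{towersfrat} yields $F(\bar A) = F(A)/\Phi(A)$; the correspondence theorem for ideals of $\bar A$ contained in $F(\bar A)$ (pulled back, they become ideals of $A$ lying inside $F(A)$, hence inside $\Phi(A)$) then forces $\Phi(\bar A)=0$. Next, because $\bar A^{2} = (A^{2}+\Phi(A))/\Phi(A)$ is a homomorphic image of the nilpotent algebra $A^{2}$, it is itself nilpotent, so Theorem~\ref{A2nilp-Aelem} applies to $\bar A$ and $\bar A$ is elementary. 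In particular, for $\bar M = (M+\Phi(A))/\Phi(A) \cong M/(M\cap \Phi(A))$ we obtain $\Phi(\bar M)=0$.

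The remaining step is to pull this information back to $M$. The standard observation is that, for any Leibniz algebra $L$ with ideal $I$, the quotient map $\pi : L \to L/I$ satisfies $\pi(\Phi(L)) \subseteq \Phi(L/I)$: the preimages of the maximal subalgebras of $L/I$ are exactly the maximal subalgebras of $L$ that contain $I$, so $\pi(F(L)) \subseteq F(L/I)$, and $\pi(\Phi(L))$ is then an ideal of $L/I$ contained in $F(L/I)$, hence contained in $\Phi(L/I)$. Applying this to the ideal $M\cap \Phi(A)$ of $M$, the image of $\Phi(M)$ in $\bar M$ lies in $\Phi(\bar M)=0$, and therefore $\Phi(M) \subseteq M\cap \Phi(A) \subseteq \Phi(A)$, as required.

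I do not anticipate a serious obstacle here: every substantive ingredient is either already cited (Proposition~4.3 of \cite{towersfrat} and Theorem~\ref{A2nilp-Aelem}) or is formal from the quotient correspondence. The only point that warrants a moment of care is the transfer lemma $\pi(\Phi(L))\subseteq \Phi(L/I)$ in the Leibniz setting; but this is immediate from the bijection between maximal subalgebras of $L/I$ and maximal subalgebras of $L$ containing $I$, together with the characterization of $\Phi$ as the largest ideal inside $F$.
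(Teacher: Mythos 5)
Your proof is correct and follows essentially the same route the paper intends: the paper simply defers to Proposition 2 and Theorem 2 of \cite{stitz2}, whose content is precisely this reduction modulo $\Phi(A)$ followed by the trivial-Frattini case, and you instantiate that scheme using the paper's own Theorem~\ref{A2nilp-Aelem} in place of the Lie-algebra version. The two points worth verifying both check out: Theorem~\ref{A2nilp-Aelem} is proved earlier and independently of Theorem~\ref{FratSubset}, so there is no circularity, and the transfer fact $\pi(\Phi(L))\subseteq\Phi(L/I)$ is valid for Leibniz algebras via the correspondence between maximal subalgebras of $L/I$ and maximal subalgebras of $L$ containing $I$.
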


The following result is Lemma 7.1 of \cite{towersfrat}.

\begin{lemma}
Let $A$ be an algebra and $B$ be an ideal in $A$.  If $U$ is a subalgebra of $A$ which is minimal with respect to the property $A = B+U$, then $B \cap U \subseteq \Phi(U)$.
\end{lemma}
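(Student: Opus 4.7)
The plan is to exploit the minimality of $U$ to force $B\cap U$ inside every maximal subalgebra of $U$, and then upgrade that to containment in the Frattini ideal by noting that $B\cap U$ is itself an ideal of $U$.

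First I would record the ideal property. Since $B$ is an ideal of $A$ and $U$ is a subalgebra, we have $U(B\cap U)\subseteq UB\cap U^{2}\subseteq B\cap U$, and analogously on the right, so $B\cap U$ is an ideal of $U$. This will be used in two ways below.

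Next I would show $B\cap U\subseteq F(U)$, the Frattini subalgebra of $U$. Suppose, toward a contradiction, that some maximal subalgebra $M$ of $U$ fails to contain $B\cap U$. Because $B\cap U$ is an ideal of $U$, the set $M+(B\cap U)$ is a subalgebra of $U$ that properly contains $M$, so maximality of $M$ forces $M+(B\cap U)=U$. Then
\[
A=B+U=B+M+(B\cap U)=B+M,
\]
since $B\cap U\subseteq B$. But $M$ is a proper subalgebra of $U$, contradicting the hypothesis that $U$ is minimal with respect to $A=B+U$. Hence $B\cap U$ lies in every maximal subalgebra of $U$, i.e.\ in $F(U)$.

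Finally, since $\Phi(U)$ is by definition the largest ideal of $U$ contained in $F(U)$, and since we have just shown that $B\cap U$ is an ideal of $U$ contained in $F(U)$, we conclude $B\cap U\subseteq\Phi(U)$. The only step requiring any real thought is the reduction that $M+(B\cap U)$ is a subalgebra, and this is exactly where the ideal observation from the first paragraph is needed; everything else is bookkeeping with the minimality hypothesis.
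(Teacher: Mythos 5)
Your proof is correct. The paper gives no proof of its own for this statement---it simply quotes Lemma 7.1 of \cite{towersfrat}---and your argument (first that $B\cap U$ is an ideal of $U$, then that minimality of $U$ forces $B\cap U$ into every maximal subalgebra of $U$ and hence into $F(U)$, and finally that the Frattini ideal absorbs any ideal of $U$ contained in $F(U)$) is exactly the standard one from that source.
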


The next result is shown for Lie algebras as Lemma 2.3 of \cite{towerselem}, but the proof is valid for Leibniz algebras also.

\begin{lemma}\label{idealcomp}
If $A$ is an elementary Leibniz algebra and $B$ is an ideal in $A$, then there exists a subalgebra $C$ such that $A = B + C$ and $B \cap C = 0$.
\end{lemma}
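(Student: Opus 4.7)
The plan is to produce $C$ by choosing it minimal with respect to the decomposition property and then invoking the two immediately preceding results.

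First I would argue the existence of a suitable candidate. Since $A$ itself satisfies $A = B + A$, and since (implicitly throughout the paper) $A$ is finite dimensional, I can pick a subalgebra $C$ of $A$ that is minimal with respect to the property $A = B + C$.

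Next I would apply the preceding lemma (Lemma 7.1 of \cite{towersfrat}, restated just above) to $B$ and this minimal $C$, obtaining $B \cap C \subseteq \Phi(C)$. For this inclusion to have the intended meaning I need $B \cap C$ to be an ideal of $C$; but this is immediate: if $x \in B \cap C$ and $c \in C$, then $xc, cx \in B$ because $B$ is an ideal of $A$, and $xc, cx \in C$ because $x, c \in C$, so $xc, cx \in B \cap C$.

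Finally I would invoke the elementary hypothesis: since $C$ is a subalgebra of $A$, we have $\Phi(C) = 0$ by definition, and hence $B \cap C = 0$. Combined with $A = B + C$ this gives the desired complement, completing the proof.

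There is no real obstacle here; the only small subtlety is the verification that $B \cap C$ is an ideal of $C$, needed so that the Towers lemma applies and so that $\Phi(C) = 0$ can actually be used to force $B \cap C = 0$. The proof is essentially a two-step chaining of the two lemmas stated in the paragraphs preceding the statement, exactly paralleling the Lie algebra argument in \cite{towerselem}.
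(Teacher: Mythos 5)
Your proof is correct and is essentially the same argument the paper relies on: the paper simply cites the Lie algebra proof of Lemma 2.3 in \cite{towerselem}, which is exactly this two-step chaining --- take $C$ minimal with $A = B + C$, apply Lemma 7.1 of \cite{towersfrat} to get $B \cap C \subseteq \Phi(C)$, and conclude $B \cap C = 0$ from the elementary hypothesis. Your side remark that $B \cap C$ is an ideal of $C$ is a harmless (and correct) elaboration of a point already built into the statement of the Towers lemma.
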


\begin{theorem}\label{Aelem-A2nilp}
Let $A$ be a solvable, elementary Leibniz algebra over a perfect field, $K$.  Then $A^2$ is nilpotent.
\end{theorem}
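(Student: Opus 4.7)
The plan is to induct on $\dim A$; the base case $\dim A \le 1$ is trivial. Let $N := Nil(A)$. Since $A$ is elementary, $\Phi(A) = 0$, and since $A$ is solvable every minimal ideal of $A$ is abelian (otherwise a minimal ideal would equal its own square, contradicting solvability), so $Soc(A) = Asoc(A)$. Theorem \ref{trip=} then gives $N = Asoc(A) = Soc(A)$, an abelian ideal which is nonzero whenever $A$ is (the last nonzero term of the derived series is a nonzero abelian ideal). Lemma \ref{idealcomp} combined with Theorem \ref{AsocComp} produces a decomposition $A = N \dotplus C$ in which $C$ is a Lie subalgebra of $A$, isomorphic to $A/N$, acting faithfully and completely reducibly on $N$ by derivations; complete reducibility is because each minimal ideal $B_i$ of $A$ is a simple $C$-module, $N$ acting trivially on $B_i$ by Lemma \ref{Lem1.9}.

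Because $C$ is solvable, elementary as a subalgebra of $A$, and of strictly smaller dimension, the induction hypothesis (equivalently the Lie-algebra version \cite{towers07}, since $C$ is Lie) gives that $C^2$ is nilpotent. Combined with $\Phi(C) = 0$, Proposition \ref{A2comp} then shows $C^2$ is abelian and $C = H \dotplus C^2$ for a Cartan subalgebra $H$ of $C$. I claim $H$ is abelian: $H$ is nilpotent, and for any nilpotent Leibniz algebra the Frattini subalgebra equals the derived algebra $H^2$, by the usual argument that every maximal subalgebra of a nilpotent algebra contains $H^2$, while conversely any element outside $H^2$ is avoided by some maximal subalgebra containing $H^2$. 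Since $H^2$ is automatically an ideal of $H$, this makes $\Phi(H)$ the Frattini ideal of $H$, which vanishes because $H$ is a subalgebra of the elementary algebra $A$. Hence $H^2 = 0$. Expanding $A^2 = (N + C)(N + C)$ and using $NN = 0$ together with $NC, CN \subseteq N$ yields $A^2 \subseteq M$ where $M := N + C^2$; a routine check using that $C^2$ is an ideal of $C$ shows $M$ is an ideal of $A$, so it suffices to show $M$ is nilpotent.

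By the Leibniz form of Engel's theorem, $M$ is nilpotent if and only if left multiplication by every element of $M$ is nilpotent on $M$. For $n \in N$, $NM \subseteq N$ combined with $NN = 0$ gives $L_n^2 = 0$; for $c \in C^2$, left multiplication by $c$ within $C^2$ is nilpotent because $C^2$ is a nilpotent algebra. Everything therefore reduces to the single claim that each $c \in C^2$ acts as a nilpotent endomorphism on $N$ under the embedding $C \hookrightarrow Der(N)$. In characteristic zero this is immediate from Lie's theorem applied to the solvable Lie algebra $C$ acting completely reducibly on $N$, which forces $C^2$ to act by zero.

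In positive characteristic the perfect-field hypothesis becomes essential, and this is the main obstacle. One invokes the Jordan--Chevalley decomposition inside $Der(N)$, splitting each image $L_c$ into commuting semisimple and nilpotent parts lying in $End(N)$, and uses the complete reducibility of $N$ together with the abelian Cartan $H$ to show that the semisimple parts of the elements of $C$ form a commuting toral family. Consequently $L(C^2) = [L(C), L(C)]$ consists of commutators of nilpotent parts of a commuting family of endomorphisms, which are themselves nilpotent, so $L(C^2)$ lies in the nilpotent summand. This Jordan--Chevalley step is precisely the point where the argument parallels the Lie-algebra proof in \cite{towers07}, and goes through verbatim here because $C$ is Lie.
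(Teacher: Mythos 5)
Your reduction is fine up to the last step: the decomposition $A = N \dotplus C$ with $C$ Lie, faithful and completely reducible on $N$, the passage to $C = H \dotplus C^2$ with $H$ and $C^2$ abelian, and the observation that everything hinges on $C^2$ acting nilpotently on $N$ --- all of that is sound (modulo a small point below). But that last claim \emph{is} the theorem, and your argument for it in positive characteristic does not work. First, the assertion is a non sequitur as written: even if the semisimple parts $s_c$ of the $L_c$ formed a commuting family, $[L_c,L_{c'}]$ equals $[s_c,s_{c'}]+[s_c,n_{c'}]+[n_c,s_{c'}]+[n_c,n_{c'}]$, and the cross terms do not vanish; and if the whole family commuted the commutators would be zero, not merely nilpotent, so the phrase ``commutators of nilpotent parts of a commuting family, which are themselves nilpotent'' proves nothing. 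Second, the statement you are trying to establish --- a solvable Lie algebra acting faithfully and completely reducibly has derived algebra acting nilpotently --- is simply false in characteristic $p$ without further input: the paper's own Example \ref{HeisEx} (the Heisenberg algebra acting irreducibly on a $p$-dimensional space, with $z=[y,x]$ acting as the identity) is a counterexample. So elementarity of $A$ must be used again, in an essential way, inside this final step, and your proposal never does so. Note also that since $C^2$ is an ideal of $C$ and $N$ is a completely reducible faithful $C$-module, ``$C^2$ acts nilpotently'' forces $C^2$ to act as zero, i.e.\ $C$ abelian --- which is exactly the hard conclusion. The paper gets there by a minimal-counterexample argument: it uses elementarity to reduce to $A = B + (M_1 + D)$ with $B$ the unique minimal ideal of $A$, $M_1$ the unique minimal (abelian) ideal of $C$, and $D = Kx$ one-dimensional (the uniqueness claims rely on the Lie-set version of Engel's theorem from \cite{jacobsonleib}); it then passes to the algebraic closure (this is where perfectness enters), diagonalizes $L_x$ on $M_1$ with necessarily nonzero eigenvalues $c_j$, decomposes $B$ into root spaces for $M_1$, and an explicit computation with $x(m_jb_i)$ yields $c_j\alpha_i(m_j)b_i = \sum_k(\alpha_i(m_j)-\alpha_k(m_j))b_k'$, forcing $M_1B=0$ and hence $M_1 \subseteq Z_A(Asoc(A)) = Asoc(A)$, a contradiction. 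Nothing resembling this computation appears in your sketch, and it cannot be replaced by a generic Jordan--Chevalley argument.

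A secondary gap: after reducing to showing $M = N + C^2$ nilpotent, you verify that $L_m$ is nilpotent only for $m$ in $N$ and in $C^2$ separately. For a general element $n+c$ the operators $L_n$ and $L_c$ need not commute, so nilpotency of each does not give nilpotency of $L_{n+c}$; you need the Jacobson refinement of Engel's theorem for Leibniz algebras (\cite{jacobsonleib}), which allows checking nilpotency only on a Lie set spanning the algebra --- exactly as the paper does.
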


\begin{proof}
Let $A$ be a minimal counterexample.  Since $\Phi(A) = 0$, $A = B + C$, where $B = Nil(A) = Asoc(A)$ and $C$ is a Lie subalgebra.  If $C$ is nilpotent, then it is abelian, $A^2 \subseteq B$ and we are done.  Suppose that $C$ is not nilpotent.  Let $M_1$ be a maximal subalgebra of $C$ containing $C^2$ and $M = B + M_1$.  Then $A^2 \subseteq M$.  Then $M^2$ is an ideal in $A$ and is nilpotent by induction.  Hence $M^2 \subseteq Nil(A) = B$.  Therefore ${M_1}^2 \subseteq B \cap C = 0$, so $M_1$ is abelian.  Hence $A = B + (M_1 + D)$, where $D$ is a one dimensional subalgebra with basis $x$.

We claim that $B$ is the unique minimal ideal of $A$.  Suppose that $B = B_1 + \cdots + B_t$, where each summand is a minimal ideal in $A$ and $t > 1$.  Let $S_j = B_j + C$ for each $j$.  ${S_j}^2$ is nilpotent by induction.  Since $B_j$ is a minimal ideal in $A$, 
%
%
${S_j}^2 = B_j + C^2$ or $C^2$.  Hence ${S_1}^2 \cup \cdots \cup {S_t}^2$ is a Lie set whose span is $A^2$ and left multiplication by each $s$ in the Lie set is nilpotent on $A^2$.  Hence $A^2$ is nilpotent by the theorem in \cite{jacobsonleib}, which is a contradiction.  Hence $B$ is the unique minimal ideal in $A$.

We claim that $M_1 = Nil(C) = Asoc(C)$ is the unique minimal ideal in $C$.  Suppose that $M_1 = B_1 + \cdots + B_t$, where each summand is a minimal ideal of $C$ and $t > 1$.  Let $C_j = B+B_j+D$.  By induction, ${C_j}^2$ is nilpotent and ${C_j}^2 = {B_j}B + DB + DB_j$, since $B$ is an abelian minimal ideal in $A$ and $B_j$ is an abelian minimal ideal in $C$.  $L_c$ is nilpotent on ${C_j}^2$ for each $c\in {C_j}^2$.  Therefore $L_c$ is nilpotent on ${C_j}^2+B$, since $L_c({C_j}^2+B) \subseteq {C_j}^2$.  Since $B_iB_j = 0$, $L_c$ is nilpotent on $A^2 = {C_1}^2 + \cdots {C_t}^2$ for all $c\in {C_j}^2$.  Since ${C_1}^2\cup \cdots \cup {C_t}^2$ is a Lie set, $A^2$ is nilpotent by \cite{jacobsonleib}, a contradiction.  Hence $t=1$ and $A = B+C = B+(M_1+D)$ where $B$ is a minimal ideal in $A$, $M_1$ is a minimal ideal in $C$, and $D$ is a one dimensional subalgebra with basis $x$.

Let $E$ be the algebraic closure of $K$.  We will show that $E \otimes A^2$ is nilpotent, hence that $A^2$ is nilpotent.  Since $Nil(C) = Asoc(C) = M_1$, $L_x$ acts completely reducibly on $M_1$.  For each $m\in M_1$, $L_m$ acts completely reducibly on $B$ since $B \subseteq Asoc(B+Km)$. %
%
%
Passing to $E\otimes A$, since $K$ is perfect, the extension of these left multiplications are diagonalizable on $E\otimes M_1$ and $E\otimes B$ respectively.  In the second case, the $L_m$ commute, so the $L_m$ are simultaneously diagonalizable on $E\otimes B$.  We will show that $(E\otimes A)^2$ is nilpotent under these conditions, and hence assume that $K$ is algebraically closed.  Hence there is a basis of eigenvectors, $m_1, \ldots m_t$ in $M_1$ for $L_x$.  Therefore $xm_j = c_j m_j$ for some $c_j \in K$.  If $c_j = 0$ for any $j$, then 0 is an eigenvalue of $L_x$ on $M_1$.  
Since $M_1$ is irreducible over $K$, $M_1$ is one dimensional and $C$ is two dimensional abelian, a contradiction to $M_1$ being the unique minimal ideal in $C$.  Thus $c_j \neq 0$ for all $j$.  Also, $B$ is the direct sum of 
root spaces $B_{\alpha_i} = \{ b\in B | mb = \alpha_i(m) b \mbox{ for all } m\in M_1\}$.

Let $b_i \in B_{\alpha_i}$.  Then $xb_i = {b_1}' + \cdots + {b_n}'$, where ${b_j}' \in B_{\alpha_j}$.  Then
\begin{eqnarray*}
x(m_jb_i) &=& (xm_j)b_i + m_j(xb_i) \\
x\alpha_i(m_j)b_i &=& c_jm_jb_i + m_j\left(\textstyle\sum_k {b_k}'\right) \\
\alpha_i(m_j)\left(\textstyle\sum_k {b_k}'\right) &=& c_j\alpha_i(m_j)b_i + \textstyle\sum_k \alpha_k(m_j){b_k}' \\
c_j\alpha_i(m_j)b_i &=& \textstyle\sum_k \left(\alpha_i(m_j) - \alpha_k(m_j)\right) {b_k}'.
\end{eqnarray*}
Since $c_j \neq 0$ for all $j$, it follows that $m_jb_i = \alpha_i(m_j)b_i = 0$ for all $i$ and $j$.  Therefore $M_1B = 0$.  Since $B$ is a minimal ideal in $A$, $BM_1 = 0$ and $M_1 \subseteq Z_A(Asoc(A)) = Asoc(A)$, a contradiction.  Hence no minimal counterexample exists and the result holds.
\end{proof}

A construction of solvable elementary Lie algebras is given in \cite{towers07}.  Let $A$ be a vector space and let $B$ be an abelian Lie subalgebra of $gl(A)$ which acts completely reducibly on $A$.  Forming the semidirect sum of $A$ and $B$ one obtains an elementary, solvable, almost algebraic Lie algebra $L$.  We can use this construction to obtain elementary, solvable, Leibniz algebras.  Since $A = Asoc(L)$, $A$ is the direct sum of minimal ideals of $L$.  Let $A = A_1 + A_2$, where each summand is an ideal in $L$.  Define the Leibniz algebra $L^*$ to be the vector space $A+B$ with the same multiplication as in $L$ except that $A_2B = 0$.  This algebra has $\Phi(L^*) \subseteq (L^*)^2 \subseteq A$ and $\Phi(L^*) \subseteq B$, hence $\Phi(L^*) = 0$ and therefore, $L^*$ is elementary.  Following \cite{towers07}, such algebras will be called of type I.

\begin{corollary}
Let $A$ be a solvable Leibniz algebra over a perfect field $K$.  The following are equivalent:
\begin{enumerate}
\item $A$ is elementary.
\item $\Phi(A) = 0$ and $A^2$ is nilpotent.
\item $\Phi(A) = 0$ and $A$ is metabelian.
\item $Asoc(A)$ is complemented by an abelian subalgebra of $A$.
\item $A = B+E$, where $B$ is abelian and $E$ is elementary of type I.
\end{enumerate}
\end{corollary}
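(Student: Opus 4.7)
The plan is to organize the equivalences around condition (2), establishing (1) $\Leftrightarrow$ (2), (2) $\Leftrightarrow$ (3), (2) $\Leftrightarrow$ (4), and (2) $\Leftrightarrow$ (5). The core equivalence (1) $\Leftrightarrow$ (2) is essentially free: elementarity gives $\Phi(A) = 0$ by definition and $A^2$ nilpotent by Theorem \ref{Aelem-A2nilp}, while the converse is exactly Theorem \ref{A2nilp-Aelem}. The equivalence (2) $\Leftrightarrow$ (3) reduces to showing $A^2$ is abelian: metabelian just says $(A^2)^2 = 0$, and the converse uses the observation from the proof of Theorem \ref{A2nilp-Aelem} that $\Phi(A^2) = (A^2)^2$ when $A^2$ is nilpotent, combined with $\Phi(A^2) \subseteq \Phi(A) = 0$.

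For (2) $\Leftrightarrow$ (4), the forward direction combines solvability with Theorem \ref{trip=} to give $Nil(A) = Asoc(A)$, complemented by a subalgebra $C$ via Proposition \ref{A2comp}(2). Since $A$ is elementary, so is $C$, and $C^2 \subseteq A^2 \subseteq Asoc(A)$ forces $C^2 \subseteq C \cap Asoc(A) = 0$, so $C$ is abelian. Conversely, if $A = Asoc(A) \dotplus C$ with $C$ abelian, then $A^2 \subseteq Asoc(A)$ is abelian; as a submodule of the semisimple $A$-module $Asoc(A)$ it admits an $A$-submodule complement $B'$, and $B' + C$ is a subalgebra complementing $A^2$ in $A$, so Proposition \ref{A2comp}(3) gives $\Phi(A) = 0$.

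Finally, for (5) $\Leftrightarrow$ (2), the implication (5) $\Rightarrow$ (2) is direct: if $A = B + E$ is a direct sum of ideals with $B$ abelian and $E$ of type I, then $A^2 = E^2$ sits in the abelian socle of $E$ and is nilpotent, while $\Phi(A) = 0$ follows from Proposition \ref{A2comp}(3) after combining $B$ with a subalgebra complement to $E^2$ in $E$. For (2) $\Rightarrow$ (5), start from $A = Asoc(A) \dotplus C$ in (4) and partition the minimal ideals of $Asoc(A)$ into three types via Lemma \ref{Lem1.9}: the central ones, the Lie-like ones (with $ba = -ab$), and the purely Leibniz ones (with $AB_i = B_i$ and $B_iA = 0$), summing them into $A_0$, $V_1$, $V_2$ respectively, so that $Asoc(A) = A_0 \oplus V_1 \oplus V_2$ and $A^2 = V_1 \oplus V_2$. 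Define $C_0 = \{c \in C : c(V_1 + V_2) = 0\}$, check $C_0 \subseteq Z(A)$ by using Lemma \ref{Lem1.9} to see $(V_1 + V_2)c = 0$ as well, pick a vector space complement $C_1$ of $C_0$ in $C$, and set $B = A_0 + C_0$ and $E = V_1 + V_2 + C_1$. One then verifies that $A = B + E$ is a direct sum of ideals with $B$ central and abelian, and that $E$ realizes the type I construction on $V = A^2$, $W = C_1$, with the built-in relation $V_2 W = 0$. The main obstacle is this last structural decomposition: it requires carefully separating central from non-central pieces on both the $Asoc(A)$ and the $C$ side, and inheriting the complete reducibility of the $W$-action on $V$ from the semisimplicity of the $A$-module $Asoc(A)$, so as to exactly match the type I definition.
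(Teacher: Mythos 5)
Your proposal is correct, and all five conditions are linked by valid arguments, but the architecture differs from the paper's. The paper proves a single cycle $1\Rightarrow 2\Rightarrow 3\Rightarrow 4\Rightarrow 5\Rightarrow 1$, whereas you use condition 2 as a hub and prove four two-way equivalences; this costs more implications but makes each one nearly self-contained. Three of your sub-arguments genuinely diverge from the paper's. First, for $2\Rightarrow 3$ the paper passes through the decomposition $A = Nil(A)+B$ of Theorem \ref{trip=} and Lemma 7.2 of \cite{towersfrat}, while you reuse the observation from the proof of Theorem \ref{A2nilp-Aelem} that $\Phi(A^2)=(A^2)^2$ is an ideal of $A$ inside $\Phi(A)=0$; both work. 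Second, for getting from 5 back to the other conditions, the paper invokes the closure of elementarity under direct sums ($5\Rightarrow 1$), while you verify $5\Rightarrow 2$ directly by computing $A^2=E^2\subseteq V$ and exhibiting a complement via Proposition \ref{A2comp}(3); your route avoids an unproved auxiliary fact at the cost of a short module-theoretic check. Third, your $2\Rightarrow 5$ carries more bookkeeping than the paper's $4\Rightarrow 5$: the paper writes $Asoc(A)=Z(A)+K$ and shows in two lines that the complement $B$ acts faithfully on $K$, whereas you split off the subspace $C_0=\{c\in C: c(V_1+V_2)=0\}$. Note that $C_0$ is automatically zero: your own argument shows $C_0\subseteq Z(A)$, and $Z(A)\subseteq Asoc(A)$ (every one-dimensional subspace of $Z(A)$ is an abelian minimal ideal), so $C_0\subseteq C\cap Asoc(A)=0$. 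This is a harmless redundancy, not an error, and the rest of your decomposition into central, Lie-like, and purely Leibniz minimal ideals correctly matches the type I construction.
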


\begin{proof}
1 implies 2 is Theorem \ref{Aelem-A2nilp}.
2 implies 3.  By Theorem \ref{trip=} and Lemma 7.2 of \cite{towersfrat}, $A = Nil(A) + B$.  Since $A^2$ is nilpotent, $A^2 \subseteq Nil(A)$.  Hence $B$ is abelian.
3 implies 4.  Using Theorem \ref{trip=} and Lemma 7.2 of \cite{towersfrat}, $A = Nil(A) + B$ and $A/Nil(A)$ is abelian.
4 implies 5.  Let $Asoc(A) = Z(A) + K$, where $K$ is an ideal of $A$.  Put $E = K+B$, where $B$ is as in 4.  Then $Z_A(K) \cap B \subseteq Z(E) \subseteq Z(A) \cap E = 0$.  Hence $B$ is isomorphic to a subalgebra of $gl(K)$.  Since $K$ is contained in $Asoc(A)$, $K$ is completely reducible as a $B$-module.  Hence $E$ is of type I.
5 implies 1.  As in Lie algebras, the direct sum of elementary Leibniz algebras is elementary, hence $A$ is elementary.
\end{proof}


\section{Classification of Elementary Leibniz \\ Algebras}
%
%

A Leibniz algebra, $A$, is elementary if the Frattini ideal of every subalgebra of $A$ is 0.  By Theorem \ref{FratSubset}, if the underlying field has characteristic 0, then $A$ is elementary if the Frattini subalgebra of every subalgebra of $A$ is 0.  The following two theorems have been shown for Lie algebras in \cite{towerselem} and \cite{towers07} respectively.

\begin{theorem}
Let $A$ be a Leibniz algebra over an algebraically closed field $K$ of characteristic 0.  Then $A$ is elementary if and only if either
\begin{enumerate}
\item $A$ is the direct sum of copies of $sl_2(K)$ or
\item $A$ has basis $e_i$, $f_j$, for $i = 1, \ldots m$ and $j = 1, \ldots n$, with $e_if_j = \lambda_{ij}f_j$, either $e_if_j = - f_je_i$ or $f_je_i = 0$, and all other products between basis elements equal to 0 or
\item $A$ is the direct sum of an algebra from 1 and an algebra from 2.
\end{enumerate}
\end{theorem}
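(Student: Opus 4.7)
\emph{Sufficiency.} I would verify directly that each of the three types is elementary. Type~1 is the Lie algebra classification in \cite{towers07} combined with the fact that a direct sum of elementary Leibniz algebras is elementary. Type~2 is the algebraically-closed specialization of the type~I construction described after Theorem \ref{Aelem-A2nilp}: over algebraically closed $K$, the abelian subalgebra acting completely reducibly simultaneously diagonalizes, producing a basis $\{e_i, f_j\}$ with weights $\lambda_{ij}$, and the choice $f_j e_i = 0$ or $-e_i f_j$ is the dichotomy supplied by Lemma \ref{Lem1.9}. Type~3 then follows by the same direct-sum property.

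\emph{Necessity.} Assume $A$ is elementary, so $\Phi(A) = 0$. The characteristic-zero structure corollary of the previous section yields $A = Asoc(A) \dotplus V$ with $V = S \oplus Z(V)$ and $S$ a semisimple Lie algebra. Since subalgebras inherit elementarity, $S$ is an elementary semisimple Lie algebra; the Lie algebra classification in \cite{towers07} identifies $S$ as a direct sum of copies of $sl_2(K)$.

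The crux is to show that $S$ annihilates $A' := Asoc(A) \dotplus Z(V)$ on both sides, making $A = S \oplus A'$ a Leibniz direct sum. The vanishing on $Z(V)$ is immediate from $V = S \oplus Z(V)$ as a Lie direct sum. For $Asoc(A)$, by Lemma \ref{Lem1.9} it suffices to show $SB = 0$ for every minimal ideal $B \subseteq Asoc(A)$. Suppose instead that some $x \in S$ has $xB \neq 0$; I would take $x$ to be a standard nilpotent generator of an $sl_2(K)$-summand of $S$ acting nontrivially on $B$, so $x$ acts as a nonzero nilpotent operator on $B$. Choose $b \in B$ with $x^{k-1}b \neq 0$ and $x^k b = 0$ for minimal $k \geq 2$, and set $y := x^{k-1}b$, so $y \in xB$, $y \neq 0$, and $xy = 0$. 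The subalgebra $M := Kx + B$ then contains $Ky$ as a one-dimensional ideal (using $xy = 0$, the abelianness of $B$, and Lemma \ref{Lem1.9} applied to $yx$). Because $x$ is nilpotent on $B$, any maximal proper $x$-invariant subspace of $B$ has codimension one (otherwise $B$ modulo it would be $x$-irreducible with nilpotent action, forcing it to be one-dimensional) and hence contains $xB$; a straightforward analysis then shows that every maximal subalgebra of $M$ either contains $B$ or has the form $K(x+b') + W$ with $W$ such a subspace, so in either case contains $Ky$. Thus $0 \neq Ky \subseteq \Phi(M)$, contradicting the elementarity of $A$.

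With $S$ decoupled, $A'$ is a solvable elementary Leibniz algebra. Applying the corollary classifying solvable elementary Leibniz algebras over the perfect field $K$, combined with algebraic closedness (which forces the abelian action in the type~I construction to simultaneously diagonalize), places $A'$ in the form of case~2. Assembling the $sl_2(K)$-summands from $S$ with $A'$ yields case~1, case~2, or case~3 of the theorem depending on which summand is trivial. The main obstacle is the nilpotent-element argument establishing $SB = 0$; the production of a nontrivial Frattini ideal in $Kx+B$ relies essentially on both the asymmetric Leibniz product allowed by Lemma \ref{Lem1.9} and the presence of nilpotent elements in each $sl_2(K)$-summand of $S$.
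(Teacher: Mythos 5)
Your proposal is correct and follows essentially the same route as the paper: decompose $A = Asoc(A) \dotplus (S \oplus Z)$ via the characteristic-zero corollary, kill the action of $S$ on $Asoc(A)$ by exhibiting a nilpotent non-abelian subalgebra (built from a nilpotent element of an $sl_2$-summand and a minimal abelian ideal) whose Frattini ideal is nonzero, and then simultaneously diagonalize the abelian part. The paper's version of the key contradiction uses the subalgebra generated by $t$ and a string $a_1,\ldots,a_p$ inside a minimal ideal of $B+T$, noting $\Phi(E)=E^2\neq 0$, which is the same idea as your $Kx+B$ argument in slightly different packaging.
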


\begin{proof}
If $A$ is semisimple, then $A$ is a Lie algebra and it is of type 1, which follows from \cite{towerselem}.

In general $A = B + (Z \oplus S)$, where $B = Asoc(A)$, $Z$ is abelian, and $S$ is as in 1.  Suppose that $S$ is not 0.  We claim that $BS = SB = 0$.  Since $S$ is as in part 1, let $T$ be one of the simple summands in $S$.  Consider the subalgebra $C = B+T$ and let $D$ be a minimal ideal of $C$ contained in $B$.  If $\dim D = p \geq 2$, then there exists $t\in T = sl_2(K)$ and $a_1, a_2, \ldots, a_p$, with $ta_i = a_{i+1}$ for $i = 1, \ldots p-1$ and $ta_p = 0$.  Either $Dt = 0$ or $td = -dt$ for all $d\in D$.  In either case $\Phi(E) = E^2 \neq 0$, where $E$ is the subalgebra generated by $t$ and the $a_i$.  This contradiction forces $p=1$ and $TD=0=DT$.  Hence $SB = 0 = BS$.  Thus $Z$ acts completely reducibly on $B$.  Again using that $DS = 0$ or $ds = -sd$ for all $d\in D$ and $s\in S$, $Z$ acts completely reducibly on the left of $B$.  Hence $L_z$ is semisimple for each $z\in Z$.  Since $Z$ is abelian, the left multiplications of $z\in Z$ are simultaneously diagonalizable on $B$, hence there are bases for $Z$ and $B$ such that the left multiplications hold as in 2 and then the right multiplications hold using 
Lemma \ref{Lem1.9}.

The converse is clear.
\end{proof}

We next obtain the result of the last theorem for Leibniz algebras over fields of characteristic $p > 3$.

\begin{lemma}\label{perfect}
Let $K$ be perfect.  Let $A$ be an elementary Leibniz algebra over $K$.  Then $A = Asoc(A) + (B+S)$, where $B$ is abelian, $S$ is semisimple, and $BS+SB \subseteq B$.
\end{lemma}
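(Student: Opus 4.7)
The plan is to reduce this to the analogous Lie algebra classification and then transport the decomposition back through Theorem \ref{AsocComp}. Since $A$ is elementary, in particular $\Phi(A)=0$, so Theorem \ref{AsocComp} applies and produces a splitting $A = Asoc(A) \dotplus V$, where $V$ is a Lie subalgebra of $A$ isomorphic to a subalgebra of $Der(Soc(A))$. The first key observation I would make is that $V$ inherits the elementary property: every subalgebra of $V$ is also a subalgebra of $A$, so its Frattini ideal vanishes. Thus $V$ is an elementary Lie algebra over the perfect field $K$.

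Next I would invoke the classification of elementary Lie algebras over perfect fields from \cite{towers07} (building on \cite{towerselem}). For such a Lie algebra $V$, the solvable radical is abelian (this is the Lie algebra counterpart of the corollary immediately preceding the lemma, which tells us that a solvable elementary algebra over a perfect field is metabelian and in fact $Asoc$-complemented by an abelian subalgebra), and a Levi factor exists. Hence $V = B + S$ with $B$ the abelian radical of $V$ and $S$ a semisimple Lie subalgebra. Substituting into the Theorem \ref{AsocComp} decomposition gives $A = Asoc(A) + (B+S)$.

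Finally, because $V$ is a Lie algebra and $B$ is an ideal of $V$, the bracket relation $[B,S] \subseteq B$ becomes $BS = -SB \subseteq B$, so $BS + SB \subseteq B$, as required.

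The main obstacle is not the reduction or the closing inclusion $BS + SB \subseteq B$, both of which are mechanical once Theorem \ref{AsocComp} is in hand. The real content is the assertion that an elementary Lie algebra over a perfect field decomposes as an abelian radical plus a semisimple Levi factor. One must justify (i) the existence of a Levi complement in positive characteristic, which is where the perfect hypothesis is used, and (ii) the abelianness of the radical, which ultimately traces back to the same argument as Theorem \ref{Aelem-A2nilp}: in an elementary Lie algebra the square of the solvable radical is nilpotent, and combined with $\Phi = 0$ and the socle decomposition this forces the radical to be abelian. Once those two facts are extracted from \cite{towers07}, the present lemma is essentially a translation into the Leibniz setting via Theorem \ref{AsocComp}.
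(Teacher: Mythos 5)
There is a genuine gap at the heart of your argument: the assertion that the solvable radical of an elementary Lie algebra over a perfect field is abelian is false. The two-dimensional non-abelian Lie algebra $\langle x,y\rangle$ with $[x,y]=y$ is elementary (its only proper nonzero subalgebras are one-dimensional, and two of them intersect in $0$, so $\Phi=0$ everywhere) and is equal to its own radical, which is not abelian; more generally any type I algebra with nontrivial action has non-abelian radical. The corollary you cite in support only gives that a solvable elementary algebra is \emph{metabelian}, i.e. $Rad(V)^2\subseteq Asoc(V)$, which is strictly weaker than $Rad(V)^2=0$. So the strategy of classifying $V$ abstractly and transporting the decomposition back through Theorem \ref{AsocComp} cannot work: the abelianness of $B$ is not an intrinsic property of the complement as a Lie algebra, but a consequence of how it sits inside $A$ relative to $Asoc(A)$.

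The paper's proof supplies exactly the missing interaction. Take $C$ complementing $Asoc(A)$ via Lemma \ref{idealcomp}, set $B=Rad(C)$, and complement $B$ in $C$ by a subalgebra $S$ using Lemma \ref{idealcomp} again (applied to the elementary algebra $C$); this also disposes of your worry (i), since $S\cong C/Rad(C)$ is automatically semisimple and no positive-characteristic Levi theorem is needed. Then $Rad(A)=Asoc(A)+B$ is solvable and elementary, so $Rad(A)^2$ is nilpotent by Theorem \ref{Aelem-A2nilp}; being a nilpotent ideal of $A$ it lies in $Nil(A)=Asoc(A)$ by Theorem \ref{trip=}, whence $B^2\subseteq Rad(A)^2\cap C\subseteq Asoc(A)\cap C=0$. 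It is this last step --- $B$ meets $Asoc(A)$ trivially while all of $Rad(A)^2$ is absorbed into $Asoc(A)$ --- that forces $B$ to be abelian, and it is invisible from the abstract isomorphism type of $V$ alone. The closing inclusion $BS+SB\subseteq B$ is fine, since $B$ is an ideal of $C$.
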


\begin{proof}
$A = Asoc(A) + C$ by Lemma \ref{idealcomp}.  Let $B = Rad(C)$.  By the same Lemma, $C = B+S$, where $S$ is semisimple.  Clearly $Rad(A) = Asoc(A) + B$.  Since $Rad(A)$ is solvable and elementary, $Rad(A)^2$ is nilpotent by Theorem \ref{Aelem-A2nilp}, hence $B^2 \subseteq Nil(A) \cap B = Asoc(A) \cap B = 0$.
\end{proof}

\begin{theorem}
Let $A$ be a Leibniz algebra over an algebraically closed field $K$ of characteristic $p > 3$.  Then $A$ is elementary if and only if either
\begin{enumerate}
\item $A$ is the direct sum of copies of $sl_2(K)$ or
\item $A$ has basis $e_i$, $f_j$, for $i = 1, \ldots m$ and $j = 1, \ldots n$, with $e_if_j = \lambda_{ij}f_j$, either $e_if_j = - f_je_i$ or $f_je_i = 0$, and all other products between basis elements equal to 0 or
\item $A$ is the direct sum of an algebra from 1 and an algebra from 2.
\end{enumerate}
\end{theorem}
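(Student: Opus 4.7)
The plan is to mirror the preceding characteristic-zero theorem as closely as possible, with two modifications. The clean Levi-style splitting $A = Asoc(A) + (Z \oplus S)$ available in characteristic zero is replaced by the weaker decomposition $A = Asoc(A) + (B+S)$ with $BS + SB \subseteq B$ provided by Lemma \ref{perfect}, and the classification of elementary semisimple Lie algebras in characteristic $p > 3$ from \cite{towers07} replaces the characteristic-zero Lie-theoretic input.

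For the forward direction, assume $A$ is elementary. Since $K$ is algebraically closed of characteristic $p > 3$, it is perfect, so Lemma \ref{perfect} yields $A = N + (B + S)$, where $N = Asoc(A)$ and $B$ are abelian, $S$ is semisimple, and $BS + SB \subseteq B$. As a subalgebra of the elementary algebra $A$, $S$ is itself elementary, and being semisimple it is a Lie algebra by the corollary to Theorem \ref{AsocComp}. The Lie-algebra theorem of \cite{towers07} then forces $S$ to be a direct sum of copies of $sl_2(K)$, accounting for the type 1 factor.

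The next step is to show that $S$ annihilates $N+B$ on both sides. Fix a simple summand $T = sl_2(K)$ of $S$ and apply the Jordan-chain argument of the preceding theorem to each of the subalgebras $N + T$ and $B + T$. If a minimal ideal $D$ of $N + T$ contained in $N$ had $\dim D \geq 2$, some nilpotent $t \in T$ would act non-trivially on $D$, producing $a_1, \ldots, a_k$ with $ta_i = a_{i+1}$; the subalgebra $E = \langle t, a_1, \ldots, a_k\rangle$ is then nilpotent (Engel), so $\Phi(E) = E^2 \neq 0$, contradicting that $A$ is elementary. Hence $\dim D = 1$, and since $sl_2(K) = sl_2(K)^2$ admits no nontrivial one-dimensional module, $TD = DT = 0$. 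Summing over minimal ideals gives $TN = NT = 0$, and the identical argument in $B + T$ yields $TB = BT = 0$. Running over all simple summands of $S$, we obtain a direct sum $A = R \oplus S$ of ideals, where $R := N + B$.

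Finally, I would put $R$ into the form of type 2. $R$ is solvable and elementary, so by Theorem \ref{Aelem-A2nilp} $R^2$ is nilpotent and by Proposition \ref{A2comp} $R$ acts completely reducibly on $N$. Since $B$ is abelian and $K$ algebraically closed, the commuting family $\{L_b : b \in B\}$ on $N$ is simultaneously diagonalizable; choosing a common eigenbasis $f_1, \ldots, f_n$ of $N$ and any basis $e_1, \ldots, e_m$ of $B$ produces $e_i f_j = \lambda_{ij} f_j$, and Lemma \ref{Lem1.9} gives the two possibilities for $f_j e_i$ listed in type 2. Thus $A$ is of type 1, 2, or 3 according to which summand is zero, and the converse is a direct verification (using \cite{towers07} for the $sl_2$-summands). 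The main obstacle I anticipate is the simultaneous-diagonalization step: reducing complete reducibility of the commuting family $\{L_b\}$ to individual semisimplicity of each $L_b$ really does need the perfectness of $K$, and one must be careful that the restricted $p$-power maps in characteristic $p > 3$ do not obstruct this — otherwise the argument transfers verbatim from the characteristic-zero case.
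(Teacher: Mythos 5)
Your proposal is correct and follows essentially the same route as the paper: the decomposition from Lemma \ref{perfect}, identifying $S$ as a sum of copies of $sl_2(K)$, forcing irreducible submodules of $Asoc(A)$ and of $B$ under each simple summand to be one-dimensional via elementarity (the paper phrases this with a cyclic $Ke$-module on which $e$ acts nilpotently, so that $V+Ke$ is nilpotent hence abelian, rather than your Jordan-chain subalgebra, but the mechanism is identical), and then simultaneous diagonalization of the abelian part. The diagonalization worry you flag is resolved in the paper exactly as you used it earlier: for a minimal ideal $V$ and each single element $c$, the subalgebra $V+Kc$ is elementary, so $V\subseteq Asoc(V+Kc)$, giving complete reducibility and hence (over the algebraically closed $K$) diagonalizability of each individual $L_c$, after which commutativity finishes the job.
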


\begin{proof}
If $A$ is solvable, then $A = Asoc(A) + C$, where $C$ is abelian by Lemma \ref{perfect}.  Let $B$ be a minimal ideal of $A$.  Then $B \subseteq Nil(B+Kc) = Asoc(B+Kc)$ and $L_c$ acts completely reducibly on $B$ for each $c\in C$, hence $L_c$ acts diagonally on $B$.  Since $C$ is abelian, the $L_c$ are simultaneously diagonalizable on $B$ and $A$ is as in 2 follows from 
Lemma \ref{Lem1.9}.

If $A$ is semisimple, then $A$ is Lie and is as in 1 by Theorem 3.2 of \cite{towersfrat}.

If $A$ is neither solvable nor semisimple, then $A = B + (C+S)$, where $B = Asoc(A) \neq 0$, $C$ is abelian, $S$ is semisimple, and $CS+SC \subseteq C$ by the above lemma.  Again $S$ is as in 1.  Let $D_i = B+S_i$, where $S_i$ is a simple summand of $S$.  Then $B = Nil(D_i) = Asoc(D_i)$ and $B$ is a completely reducible $D_i$ module.  If $V$ is an irreducible submodule of $B$, then we claim $\dim V = 1$.  Since $V$ is an irreducible left $D_i$ Lie module, there exists $e\in D_i$ such that $V$ is a cyclic $Ke$ module and $e$ acts nilpotently on $V$.  Now $M = V+Ke$ is a nilpotent subalgebra since either $Ve=0$ or $ve = -ev$ for all $v\in V$.  Hence $M$ is abelian since it is elementary.  Therefore $eV = 0$ and $\dim V = 1$.  Hence $BS = SB = 0$.

If $C = 0$, we are done.  If not, then $C+S$ is elementary and $Nil(C+S) = Asoc(C+S) = C$ and as in the last paragraph, $CS =SC = 0$.  Then $A = (B+C) + S$, where $B+C$ is as in 2 and so $A$ is as in 3.
\end{proof}


\begin{thebibliography}{ABCDE}


\bibitem[1]{ao} Albeverio, Sh, Ayupov, A., Omirov, B.  On nilpotent and simple Leibniz algebras, Comm. in Alg.  33 (2005)  159-172.

\bibitem[2]{barnes1} Barnes, D.  On the cohomology of soluble Lie algebras, Math.  Z. 101, (1967) 343-349.

\bibitem[3]{barnes2} Barnes, D.  On Cartan subalgebras of Lie algebras, Math.  Z. 101, (1967) 350-355.

\bibitem[4]{barnesgh} Barnes, D., Gastineau-Hills, Ho., On the theory of solvable Lie algebras, Math. Z. 16, (1968) 343-354.

\bibitem[5]{barnesleib} Barnes, D.  Some theorems on Leibniz algebras, Comm. Algebra (7) 39, (2011) 2463-2472.   

\bibitem[6]{jacobsonleib} Bosko, L., Hedges, A., Hird, J., Schwartz, N., Stagg, K.  Jacobson's refinement of Engel's theorem for Leibniz algebras, Involve, to appear.  arXiv:1101.2438v2.

\bibitem[7]{jacobson} Jacobson, N.  Lie algebras, Dover (1979).

\bibitem[8]{loday} Loday, J.  Une version non commutative des algebres de Lie: les algebres de Leibniz, Enseign Math.  39 (1993) 269-293.

\bibitem[9]{loday2} Loday, J.  Pirashvili, T.  Universal enveloping algebras of Leibniz algebras and (co)homology, Math. Ann. 296 (1993) 139-158.

\bibitem[10]{omirov} Omirov, B.  Conjugacy of Cartan subalgebras of complex finite dimensional Leibniz algebras, J. Algebra 302 (2006) 887-896.

\bibitem[11]{stitz1} Stitzinger, E.  On the Frattini subalgebra of a Lie algebra, J. London Math. Soc. 2 (1970) 429-438.

\bibitem[12]{stitz2} Stitzinger, E.  Frattini subalgebras of a class of solvable Lie Algebras, Pac. J. Math 34 (1970) 177-182.

\bibitem[13]{towersfrat} Towers, D.  A Frattini theory for algebras, Proc. London Math. Soc. 27, (1973) 440-462.

\bibitem[14]{towerselem} Towers, D.  Elementary Lie algebras, J. London Math Soc. (2), 7 (1973), 295-302.

\bibitem[15]{towers07} Towers, D., Varea V.  Elementary Lie Algebras and Lie $A$-algebras, J. Algebra 312 (2007) 891-901.

\end{thebibliography}
\end{document}